\documentclass[10pt]{amsart}
\usepackage{amsmath}
\usepackage{amssymb,amscd}
\usepackage{graphicx}
\usepackage{tikz}
\usepackage{tikz-cd}
\usepackage{mathdots}
\usepackage{mathrsfs}       
\usepackage{graphicx}

\usepackage{microtype} 

\usepackage{colonequals} 

%

\theoremstyle{plain}
\newtheorem{theorem}{Theorem}[section]
\newtheorem{lemma}[theorem]{Lemma}
\newtheorem{proposition}[theorem]{Proposition}
\newtheorem{corollary}[theorem]{Corollary}
\newtheorem{question}[theorem]{Question}

\theoremstyle{definition}
\newtheorem{definition}[theorem]{Definition}
\newtheorem{example}[theorem]{Example}

\numberwithin{equation}{section}

\def \begineq{\begin{equation}}
\def \endeq{\end{equation}}

\def \bb{\mathbb}

\def \CC{{\bb{C}}}
\def \CP{\bb{C}{\rm P}}

\def \RR{{\bb{R}}}
\def \ZZ{{\bb{Z}}}

\def \Ann{{\rm Ann}}
\def \Aut{{\rm Aut}}

\def \Hom{{\rm Hom}}

\def \rk{{\rm rk}}
\def \im{{\rm im}}

\begin{document}

\title[Locally $k$-standard $T$-manifolds]{Equivariant cohomological rigidity of  certain $T$-manifolds}

\author[S. Sarkar]{Soumen Sarkar}

\address{Department of Mathematics, Indian Institute of Technology Madras,
Chennai 600036, India}
\email{soumen@iitm.ac.in}

\author[J. Song]{Jongbaek Song}
\address{School of Mathematics, KIAS, Seoul 02455, Republic of Korea}
\email{jongbaek@kias.re.kr}

\date{\today}

\subjclass[2010]{55N91, 57R91}

\keywords{torus action, equivariant cohomology theory, cohomological rigidity, (quasi)toric manifold, contact toric manifold, moment-angle manifold, partial quotient}

\abstract  
We introduce the category of {\it locally $k$-standard $T$-manifolds} which includes well-known classes of manifolds such as toric and quasitoric manifolds, good contact toric manifolds and moment-angle manifolds. They are smooth manifolds with well-behaved actions of tori. We study their topological properties, such as fundamental groups and equivariant cohomology algebras. Then, we discuss when the torus equivariant cohomology algebra distinguishes them up to weakly equivariant homeomorphism. 
\endabstract

\maketitle

\section{Introduction}\label{intro}

The interaction among algebra,  combinatorics, geometry and topology  is blossoming in the garden of mathematics. It often comes through group actions on topological or geometric objects. Depending on the hypothesis on group actions on spaces, one associates various types of combinatorial objects to analyze their algebraic, geometric and topological properties. 
Amongst a variety of examples along the lines of this philosophy, the spaces equipped with torus actions have been intensively studied for the last few decades and  the following interesting bridges have been found. 
 
 Symplectic manifolds with Hamiltonian torus actions have convex polytopes as their images of moment maps \cite{Atiyah, GuSt}. In addition, if the dimension of the torus is the half of the dimension of a given manifold, then the image of the moment map is given by a Delzant polytope, and such a correspondence is bijective up to some equivalent relations \cite{Del}. GKM manifolds are related to certain graphs with labels on edges encoding the tangential representation at each fixed point \cite{GKM, GuZa}. A normal separated toric variety defines a collection of cones in a vector space, which is called a fan, and vice versa
\cite{CLS, Ful, Oda}. Also, the pioneering paper \cite{DJ} initiated a topological generalization of smooth projective toric varieties
and studies them using the combinatorics of orbit spaces and group action data. 
These are even dimensional spaces with effective torus actions with nonempty fixed points such that dimensions of tori are less than or equal to the half of the dimensions of the given spaces. We refer to \cite{BuTe, BuTe2} for the theory of $(2n, k)$-manifolds which puts together a wide class of $2n$-dimensional manifolds $M$ endowed with actions of $k$-dimensional tori $T$ such that $2\dim T \leq \dim M$. The manifolds listed above are examples of $(2n,k)$-manifolds. 

On the other hand, there are classes of manifolds beyond the above list of examples. 
For instance, compact connected contact toric manifolds of dimension greater than 3 with non-free torus actions are classified by cones satisfying certain conditions \cite{Lerman}. Moment-angle complexes and partial quotients have strong relationships with simplicial complexes \cite{Fr-PQ, BP-book}. We note that these examples are spaces $M$ with effective actions of tori $T$ such that $2\dim T > \dim M$ and   they have no fixed points. 

In this paper, we generalize some of the classes of manifolds listed above and call them \emph{locally $k$-standard $T$-manifolds}, which are $(2n+k)$-dimensional manifolds with effective actions of $(n+k)$-dimensional tori satisfying certain local property. The idea of a locally $k$-standard $T$-manifold is motivated by the work of Davis--Januszkiewicz \cite{DJ}, where they consider the standard $T^n$-action on $\CC^n$ as a local model to define toric manifolds which are also called quasitoric manifolds in recent literature. Here, we consider $T^{n+k}$-action on $\CC^{n}\times T^k$ which is an invariant subset of $\CC^{n+k}$ with respect to the standard $T^{n+k}$-action and adopt this as the local model to define a locally $k$-standard $T$-manifold, see Definition \ref{def_axiom}. This category of manifolds includes toric and quasitoric manifolds \cite{DJ, BP},  locally standard torus manifolds \cite{MP}, compact connected contact toric manifolds associated with good cones \cite{Lerman} and moment-angle manifolds \cite{BP}, as well as  infinitely many objects outside these categories which might be interesting in nature.

In this paper, we are primarily interested in locally $k$-standard $T$-manifolds whose orbit spaces are simple polytopes. We study their topological properties and equivariant classification via equivariant cohomology (or Borel equivariant cohomology). For a $G$-space $X$, the equivariant cohomology $H^\ast_G(X)$ is defined by the cohomology of the homotopy quotient, namely 
$$H_G^*(X) \colonequals H^*(EG \times_G X),$$
where $EG$ is the total space of the universal principal $G$-bundle and $EG \times_G X$ denotes the orbit space of the diagonal $G$-action on $EG \times X$.
The equivariant collapsing map $ X \to \{pt\}$ induces an $H^*(BG)$-algebra structure on $H_G^*(X)$, which carries various information about the $G$-action on $X$. 

Indeed, for the case of quasitoric manifolds or toric hyper-K\"{a}hler manifolds, somewhat surprisingly, the equivariant cohomology algebra distinguishes them up to weakly equivariant homeomorphisms or diffeomorphisms, see \cite{Kur, Mas}. Here, a weakly equivariant homeomorphism means a homeomorphism $\Psi \colon X\to Y$ between two $G$-spaces $X$ and $Y$ together with an automorphism $\delta$ of $G$ such that $\Psi(g \cdot x) = \delta(g) \cdot \Psi(x)$ for any $g\in G$ and $x\in X$. A weak isomorphism between two $R$-algebras for a ring $R$ is defined similarly. 
Now, one may ask the following:
\begin{question}
What are $G$-spaces $X$ and $Y$ such that they are weakly equivariantly homeomorphic, whenever their equivariant cohomologies are weakly isomorphic as $H^\ast(BG)$-algebras?
\end{question}
\noindent We call this question the \emph{equivariant cohomological rigidity} and 
in this paper, we give an affirmative answer to this question for a wide class of  locally $k$-standard $T$-manifolds.


 We begin Section \ref{sec_tctm} with the axiomatic and the constructive definition of a   locally $k$-standard $T$-manifold and show these two definitions are equivalent when its orbit space is a simple polytope, see Corollary \ref{cor_two_def_are_equiv}. Then we observe when a   locally $k$-standard $T$-manifold can be constructed as a quotient of a moment-angle manifold, see Proposition \ref{prop_axion=const}. We investigate their fundamental groups in Lemma \ref{lem_H^1_finite_group}, which extends the result of \cite[Theorem 1.1]{Ler2}. This lemma helps to prove the main theorem of Section  \ref{sec_equiv_cohom_rigidity}.
  We also provide several categories of manifolds which can be understood as locally $k$-standard $T$-manifolds.
  
Section \ref{sec_equiv_coh_of_tctm} is devoted to studying the equivariant cohomology algebra of a locally $k$-standard $T$-manifold $M$. If a $T$-manifold has fixed points  like GKM-manifolds, then one can use the localization technique to analyze the equivariant cohomology. Indeed, this is the case for 
the equivariant cohomological rigidity theorems given in \cite{Mas} and \cite{Kur}. However, a locally $k$-standard $T$-manifold may not have fixed points if $k \geq 1$.
 Hence, it is difficult to
use the classical localization results to analyze the equivariant cohomology of $M$.
Fortunately, using the invariant submanifolds corresponding to the faces of the orbit
space, we can determine the generators and relations for the equivariant cohomology
ring $H^{\ast}_{T} (M)$.
Then, under a mild hypothesis, we realize the ring  $H^\ast_{T}(M)$ as the Stanley--Reisner ring of the orbit space of $M$, see Theorem \ref{thm_equiv=SR}.  We also investigate  $H^\ast(BT)$-algebra structure of $H^\ast_{T}(M)$ which turns out to encode the complete information about the $T$-action on $M$.

Finally, in Section \ref{sec_equiv_cohom_rigidity}, we give a concrete answer to the equivariant cohomological rigidity about  locally $k$-standard $T$-manifolds which generalizes the result of \cite[Section 4]{Mas}.

For convenience, we often use the following well-known identifications without explicitly mentioning it.
\begin{itemize}
\item $\ZZ^{n+k} \cong \mathfrak{t}_\ZZ \cong H_2(BT^{n+k}) \cong \Hom(S^1, T^{n+k})$;
\item $(\ZZ^{n+k})^\ast \cong \mathfrak{t}_\ZZ^\ast \cong H^2(BT^{n+k}) \cong \Hom(T^{n+k}, S^1)$,
\end{itemize}
where $\mathfrak{t}_\ZZ = \ker( \exp \colon \mathfrak{t} \to T^{n+k})$ and all
cohomologies in this paper are considered with integer coefficients.



\section{Locally $k$-standard $T$-manifolds}\label{sec_tctm}
Let $T$ be a torus. We consider manifolds with $T$-actions, which are denoted by $T$-manifolds. 
In this section, we introduce the category of \emph{locally $k$-standard $T$-manifolds} and study their essential properties which can be encoded by some combinatorial data. This category of manifolds generalizes the concept of toric manifolds introduced in \cite{DJ} by extending the idea of \emph{locally standard} $T$-action for a toric manifold.

\subsection{Axiomatic definition}\label{def_axiom}
Consider the action  
$\alpha \colon T^{n+k} \times \CC^{n+k} \to \CC^{n+k}$ of $(n+k)$-dimensional torus $T^{n+k}$ on $\CC^{n+k}$ defined by
$$\alpha((t_1, \ldots,t_n, \dots,  t_{n+k}), (z_1, \ldots,z_n, \dots,   z_{n+k}))= (t_1z_1, \ldots, t_nz_n, \dots,  t_{n+k} z_{n+k}).$$ 
Then the set $\CC^n \times T^k$
is a $T^{n+k}$-invariant subset of $\CC^{n+k}$, and the orbit space $(\CC^{n}\times T^k)/T^{n+k}$ is $\RR^{n}_\geq$, the positive orthant.  We call the restriction $\alpha|_{T^{n+k}\times (\CC^{n}\times T^k)}$ the \emph{$k$-standard} $T^{n+k}$-action on $\CC^{n}\times T^k$. 

\begin{definition}\label{def:axiom_top_cont}
A $(2n+k)$-dimensional smooth manifold $M$ with an effective $T^{n+k}$-action is called a \emph{locally $k$-standard $T$-manifold} if it is \emph{locally isomorphic} to $\mathbb{C}^n \times T^k$ with the $k$-standard $T^{n+k}$-action. Here, `locally isomorphic' means for each point $p$ of $M$, there is
\begin{enumerate}
\item  an automorphism $\theta_p\in {\rm Aut}(T^{n+k})$;
\item  a $T^{n+k}$-invariant neighborhood $U \subseteq M$ of $p$ which is $\theta_p$-equivariantly diffeomorphic to a $T^{n+k}$-invariant subset $V \subseteq \CC^n \times T^k$.
\end{enumerate}
\end{definition}

Since $T^{n+k}$-action is locally $k$-standard and transversality is
a local property, we get that the orbit space $M/T^{n+k}$ is a nice
manifold with corners of dimension $n$. In this paper, we are
primarily interested in the  locally $k$-standard $T$-manifolds 
whose orbit spaces are simple polytopes. Basic properties of 
simple polytopes can be found in \cite{Zi, BP-book}. 

Let $\mathfrak{q} \colon M \to P $ be the orbit map where $P$
is an $n$-dimensional simple polytope. Let $\mathcal{F}(P)\colonequals \{F_1, \ldots, F_m\}$ be the set of  codimension-1 faces, called facets, of $P$.
Then each $M_i \colonequals \pi^{-1}(F_i)$
is a $(2(n-1)+k)$-dimensional $T^{n+k}$-invariant submanifold of $M$. From the locally
$k$-standardness and \cite[Lemma 1.3]{DJ}, we can show that $M_{i}$ is
 a $(2(n-1)+k)$-dimensional  locally $k$-standard $T$-manifold over $F_i$. Therefore, the isotropy subgroup of $M_{i}$ is a circle subgroup $T_{i}$ of
$T^{n+k}$. The group $T_{i}$ is uniquely determined by a primitive vector
$\lambda_i \in \ZZ^{n+k}$. That is, we get a natural function
\begin{equation}\label{eq_axiomatic_lambda}
 \lambda \colon \{F_1, \ldots, F_m\} \to \ZZ^{n+k}   
\end{equation}
defined by $\lambda(F_i) = \lambda_i$.

Since each vertex $v$ of $P$ is the transversal intersection
of $n$ facets $\{F_{i_1}, \cdots, F_{i_n}\}$, the manifolds
$M_{i_1}, \ldots, M_{i_n}$ intersect transversely by the locally $k$-standardness.
This implies that the submodule $A$ of $\ZZ^{n+k}$ generated by $\{\lambda_{i_1}, \dots, \lambda_{i_n}\}$ corresponding to the $n$-dimensional subtorus $T_{i_1}\times \cdots \times T_{i_n}$ of $T^{n+k}$ is a direct summand of $\ZZ^{n+k}$. Indeed, there exists a primitive vectors $\lambda_{i_{n+1}}, \ldots, \lambda_{i_{n+k}} \in \ZZ^{n+k}$ such that the rank of $A \oplus \left< \lambda_{i_{n+1}}, \ldots, \lambda_{i_{n+k}} \right>$ is $n+k$ and the volume determined by $\{\lambda_{i_1}, \ldots, \lambda_{i_n}, \lambda_{i_{n+1}}, \ldots, \lambda_{i_{n+k}}\}$ in $\RR^{n+k}$
 is equal to the volume determined by $\{\lambda_{i_1}, \ldots, \lambda_{i_n}\}$ 
 in $A \otimes_{\ZZ} \RR$, since the vectors $ \lambda_{i_{n+1}}, \ldots, \lambda_{i_{n+k}}$ determines the orbit of a point in $\mathfrak{q}^{-1}(v)$ which is a $k$-dimensional torus diffeomorphic to $0 \times T^k \subset \CC^n \times T^k$. Therefore, the locally $k$-standardness of $M$ implies that the set $\{\lambda_{i_1}, \ldots, \lambda_{i_n}\}$ is a part of a $\ZZ$-basis in $\ZZ^{n+k}$.

%

\subsection{Constructive definition}\label{def}
In this subsection, we introduce a {\it generalized hyper characteristic function} $\xi$ on a simple 
polytope $P$ of dimension $n$ and discuss the construction of a $T^{(n+k)}$-manifold associated with $P$ and $\xi$.
We denote by $\mathcal{F}(P)$ the set of facets and $V(P)$ the set of vertices of $P$.

\begin{definition}\label{def_hyper_char}
A function $\xi \colon \mathcal{F}(P) \to \ZZ^{n+k} $ is called a \emph{generalized hyper characteristic}
function if $\xi$ satisfies the following:
\begin{equation}\label{eq_cond_for_char_pair}\tag{$\star$}
\{\xi_{j_1}, \dots, \xi_{j_n}\} \text{ is a part of a $\ZZ$-basis of } \ZZ^{n+k} \text{ whenever }F_{j_1}\cap\cdots\cap F_{j_n}\neq \emptyset.
\end{equation}
 where $ \xi_j \colonequals \xi(F_j)$ for $j=1, \ldots, m$.
\end{definition}


We denote by $\im (\xi)$ the module generated by $\{\xi_1, \dots, \xi_m\}$ and by ${\rm rk}(\xi)$ its rank. In this case, $n  \leq  {\rm rk}(\xi) \leq n+k $ because of the hypothesis \eqref{eq_cond_for_char_pair}.  
We remark that the function $\xi$ is called a \emph{characteristic function} for the case $k=0$ and \emph{hyper characteristic function} if $k=1$. We refer to \cite{DJ}  and \cite{SS} respectively. For simplicity, we call $\xi$ in Definition \ref{def_hyper_char} a hyper characteristic function and call $(P, \xi)$ a \emph{hyper characteristic pair}.
We note that the map $\lambda$ in \eqref{eq_axiomatic_lambda} is a hyper characteristic function. As it is constructed from a locally $k$-standard $T$-manifold, we prefer to distinguish $\lambda$ and $\xi$  until Corollary \ref{cor_two_def_are_equiv}.



Now, we construct a $(2n+k)$-dimensional manifold with $T^{n+k}$-action as follows.
 For a point $x \in P$, let $F_{j_1} \cap \cdots \cap F_{j_\ell}$ is the face of $P$ containing $x$ in its relative interior. Then, we denote by 
 $T_x$ the subgroup of $T^{n+k}$ determined by $\{\xi_{j_1}, \dots,  \xi_{j_\ell}\}$. If $x$ belongs to the relative interior of $P$, we define $T_x$ to be the identity in $T^{n+k}$. We consider the following identification space
 \begin{equation}\label{eq_constr_DJ}
M(P, \xi)\colonequals  (T^{n+k}\times P)/_\sim,
\end{equation}
where 
\begin{equation}\label{eq_DJ_eq_rel}
(t, p)\sim (s,q) ~~ \mbox{if and only if} ~~ p=q ~~ \mbox{and} ~~ t^{-1}s \in T_p. 
\end{equation}
Here, $T^{n+k}$ acts on $M(P, \xi)$ induced by the multiplication on the first factor
of $T^{n+k} \times P$. 

\begin{proposition}\label{prop_two_definitions_are_equiv}
Let $(P, \xi)$ be a hyper characteristic pair. Then the space $M(P, \xi)$ as in
\eqref{eq_constr_DJ} is locally isomorphic to $\mathbb{C}^n \times T^k$. 
\end{proposition}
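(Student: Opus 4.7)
My plan is to verify Definition \ref{def:axiom_top_cont} at each point of $M(P,\xi)$. Fix $\bar x = [t_0, p] \in M(P,\xi)$. Since the $T^{n+k}$-action on $M(P,\xi)$ is by multiplication on the first factor of $T^{n+k} \times P$, left-translation by $t_0^{-1}$ is a $T^{n+k}$-equivariant self-diffeomorphism carrying $\bar x$ to $[e,p]$; it therefore suffices to construct the local isomorphism at every point of the form $[e,p]$.

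Let $F_{j_1} \cap \cdots \cap F_{j_\ell}$ be the unique face of $P$ whose relative interior contains $p$. Because $P$ is a simple polytope, this face contains a vertex, which lies on exactly $n$ facets; after relabelling we may take these to be $F_{j_1}, \ldots, F_{j_n}$. Condition \eqref{eq_cond_for_char_pair} then guarantees that $\{\xi_{j_1}, \ldots, \xi_{j_n}\}$ is part of a $\ZZ$-basis of $\ZZ^{n+k}$, so I complete it to a full basis $\{\xi_{j_1}, \ldots, \xi_{j_n}, \mu_1, \ldots, \mu_k\}$. Let $\theta_p \in \Aut(T^{n+k})$ be the automorphism sending this basis to the standard basis of $\ZZ^{n+k}$. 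Under $\theta_p$, the isotropy subgroup $T_q$ at each point $q$ close to $p$ becomes the standard coordinate subtorus spanned by those $e_i$ with $1 \le i \le \ell$ and $q \in F_{j_i}$.

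Next I would choose a neighborhood $W \subset P$ of $p$ diffeomorphic, as a manifold with corners, to $\RR^\ell_{\geq 0} \times \RR^{n-\ell}$ so that $F_{j_i} \cap W$ is sent to $\{x_i = 0\}$ for $i \le \ell$, and define
\begin{equation*}
\Phi \colon (T^{n+k} \times W)/{\sim} \; \longrightarrow \; \CC^\ell \times \RR^{n-\ell} \times T^{n+k-\ell},
\end{equation*}
\begin{equation*}
\bigl[(t_1,\ldots,t_{n+k}),(x_1,\ldots,x_n)\bigr] \; \longmapsto \; \bigl(x_1 t_1, \ldots, x_\ell t_\ell,\, x_{\ell+1},\ldots,x_n,\, t_{\ell+1},\ldots,t_{n+k}\bigr),
\end{equation*}
where $(t_1, \ldots, t_{n+k})$ denotes coordinates on $T^{n+k}$ with respect to the basis just fixed, and the first $\ell$ outputs use the standard polar identification $\CC \cong (\RR_{\geq 0} \times S^1)/{\sim}$. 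This is well-defined on the quotient because the relation \eqref{eq_DJ_eq_rel} collapses the $i$-th circle precisely when $x_i = 0$, which is exactly the collapsing performed by polar coordinates. Identifying $\CC^\ell \times \RR^{n-\ell} \times T^{n+k-\ell}$ with the $T^{n+k}$-invariant open subset $\CC^\ell \times (\CC^\ast)^{n-\ell} \times T^k$ of $\CC^n \times T^k$ via $\CC^\ast \cong \RR \times S^1$ on the middle factors, one obtains a $\theta_p$-equivariant diffeomorphism from a neighborhood of $[e,p]$ onto a $T^{n+k}$-invariant open subset $V \subset \CC^n \times T^k$, as required.

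The main obstacle is the simultaneous bookkeeping of the basis change $\theta_p$ on $T^{n+k}$ and the polar-coordinate identifications on each $\CC$-factor. Once the basis is in place, however, equivariance becomes automatic: condition \eqref{eq_cond_for_char_pair} forces the isotropy at every point of $M(P,\xi)$ near $[e,p]$ to coincide, under $\theta_p$, with the isotropy of the matching point in the $k$-standard model $\CC^n \times T^k$.
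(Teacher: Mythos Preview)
Your argument is correct and follows essentially the same strategy as the paper: use condition \eqref{eq_cond_for_char_pair} at a vertex to produce an automorphism $\theta$ of $T^{n+k}$ matching the local isotropy data to the standard model, and then identify the quotient with $\CC^n\times T^k$ via polar coordinates. The paper packages this slightly differently---it covers $P$ by the vertex-neighborhoods $U_v\cong\RR^n_{\geq}$ rather than working point-by-point with $W\cong\RR^\ell_{\geq 0}\times\RR^{n-\ell}$---but the content is the same; your initial reduction via left-translation by $t_0^{-1}$ is harmless but unnecessary, since the chart $(T^{n+k}\times W)/{\sim}$ you build is already $T^{n+k}$-invariant.
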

\begin{proof}
Let $\mathfrak{q} \colon M \to P $ be the orbit map.
Let $U_v$ be the open subset of $P$ obtained by deleting
all faces of $P$ not containing the vertex $v \in V(P)$. 
Let $v = F_{i_1} \cap \cdots \cap F_{i_n}$ for a unique collection of facets
$\{F_{i_1}, \ldots, F_{i_n} \}$ of $P$. Since $P$ is a simple
polytope, there is a homeomorphism 
$$\mathfrak{f} \colon \RR^n_{\geq}  \to U_v$$  
as manifold with corners such that the facet $\{x_\ell = 0\}$ of $\RR^{n}_\geq$ maps onto $F_{i_\ell}$
for $\ell = 1, \ldots, n$. Then the set $\{\xi_{i_1}, \ldots,
\xi_{i_n}\}$ is a part of a basis by the definition of $\xi$. 

Now we choose $\zeta_1, \ldots, \zeta_k \in \ZZ^{n+k}$
such that  $\{\xi_{i_1}, \ldots, \xi_{i_n}, \zeta_1, \ldots, \zeta_k\}$ is a $\ZZ$-basis of 
$\ZZ^{n+k}$. Then, we get a diffeomorphism $\theta \colon T^{n+k} \to T^{n+k}$
determined by the linear map sending $e_\ell$ to $\xi_{i_\ell}$ for $\ell = 1, \ldots, n$ and $e_{n+r}$ to $\zeta_r$ for $r= 1, \ldots, k$, where
$\{ e_1, \ldots, e_n, e_{n+1}, \ldots, e_{n+k}\}$ is the standard basis of $\ZZ^{n+k}$. Therefore
we get the following commutative diagram
$$
\begin{tikzcd}
T^{n+k} \times \RR^n_{\geq} \arrow{r}{\theta \times \mathfrak{f}} \arrow{d}& T^{n+k} \times U_v \arrow{d}\\
(T^{n+k} \times \RR^n_{\geq})/_{\sim_s} \arrow{r}{\hat{\mathfrak{f}}}& (T^{n+k} \times U_v)/_\sim,
\end{tikzcd}
$$
where $\sim_s$ is similarly defined as the relation $\sim$ in \eqref{eq_DJ_eq_rel} using $\{e_1, \dots, e_{n}\}$ as a hyper characteristic function on facets of $\RR^{n}_\geq$.  So the map $\hat{\mathfrak{f}}$ is a homeomorphism. The space $(T^{n+k} \times \RR^n_{\geq})/_{\sim_s}$ is homeomorphic to $ \CC^n \times T^k$. Therefore, $M(P, \xi)$ is a topological manifold locally isomorphic to $\mathbb{C}^n \times T^k$, since $P = \bigcup_{v \in V(P)} U_v$. 
\end{proof}

Note that the function $\lambda$ defined in \eqref{eq_axiomatic_lambda} satisfies the condition \eqref{eq_cond_for_char_pair} of a hyper characteristic function. Proposition \ref{prop_two_definitions_are_equiv} together with similar arguments in the proofs of \cite[Lemma 1.4, Proposition 1.8]{DJ} gives the following corollary. 

\begin{corollary}\label{cor_two_def_are_equiv}
Let $M$ be a  locally $k$-standard $T$-manifold with $\dim M= 2n+k$ such that $M/T^{n+k}$ is a simple polytope $P$ and $\lambda$ is a function as defined in \eqref{eq_axiomatic_lambda}. Then, $M$ is equivariantly homeomorphic to $M(P, \lambda)$ as defined in \eqref{eq_constr_DJ}. 
\end{corollary}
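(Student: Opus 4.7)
The plan is to construct a $T^{n+k}$-equivariant homeomorphism $\Phi\colon M(P,\lambda)\to M$ that covers the identity on orbit spaces, following the strategy of \cite[Proposition 1.8]{DJ}. The key observation is that by the definition of $\lambda$ in \eqref{eq_axiomatic_lambda}, the stabilizer in $T^{n+k}$ of a point $y\in M$ with $\mathfrak{q}(y)=x$ lying in the relative interior of a face $F_{j_1}\cap\cdots\cap F_{j_\ell}$ is precisely the subtorus generated by $\{\lambda_{j_1},\dots,\lambda_{j_\ell}\}$, which is exactly the group $T_x$ used in \eqref{eq_DJ_eq_rel}. Consequently, once a continuous cross section $s\colon P\to M$ of $\mathfrak{q}$ is fixed, the formula $\Phi([(t,x)])\colonequals t\cdot s(x)$ is forced on us and is automatically well-defined, $T^{n+k}$-equivariant, and compatible with the orbit maps.

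To produce such a section, I would work locally. For each vertex $v\in V(P)$, the local $k$-standardness of $M$ provides a $\theta_v$-equivariant diffeomorphism $\mathfrak{q}^{-1}(U_v)\cong \CC^n\times T^k$ under which a canonical local section $s_v$ is available, namely the image of $\RR^n_{\geq}\times\{1\}$. The analogous local section is available on $M(P,\lambda)$ by the proof of Proposition \ref{prop_two_definitions_are_equiv}, and because both local models are built from the same data $\{\lambda_{i_1},\dots,\lambda_{i_n}\}$ (the facets through $v$), the two sections have the same stabilizer structure on every face meeting $U_v$. I would then glue these local sections over the cover $\{U_v\}_{v\in V(P)}$ by an induction on the codimension of faces of $P$, using contractibility of $P$ and the fact that the action is free over the interior, exactly as in \cite[Lemma 1.4]{DJ}; the extra $T^k$-factor in the local model is a trivial global direction and does not affect the patching.

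Given $s$, verifying that $\Phi$ is a homeomorphism is local and straightforward: over each $U_v$, the map $\Phi$ coincides (up to the automorphism $\theta_v^{-1}$) with the equivariant identification furnished by Proposition \ref{prop_two_definitions_are_equiv} post-composed with the local chart of $M$. Hence $\Phi$ is a local homeomorphism; being also an equivariant bijection by the stabilizer matching above, it is a global equivariant homeomorphism.

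The main obstacle is the construction and continuity of the global section $s\colon P\to M$: the orbit map $\mathfrak{q}$ is not a fiber bundle because the fiber dimension jumps across faces, so no direct trivialization argument applies. I would handle this by an inductive patching over the face lattice of $P$, exploiting simpleness so that adjacent local sections over $U_v$ and $U_{v'}$ differ at each overlap point $x$ only by an element of the common stabilizer $T_x$; after projection to $M(P,\lambda)$ via \eqref{eq_DJ_eq_rel}, the two candidates become equal, yielding a continuous global section. With this lemma in hand, the equivalence with \cite[Proposition 1.8]{DJ} goes through verbatim.
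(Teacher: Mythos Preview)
Your overall strategy coincides with the paper's own treatment: the paper simply records that the corollary follows from Proposition~\ref{prop_two_definitions_are_equiv} together with the arguments of \cite[Lemma~1.4, Proposition~1.8]{DJ}, which is exactly the section-plus-equivariant-map plan you outline in the first three paragraphs.

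One caveat about your final paragraph: the patching heuristic you sketch is not correct as stated. Two local sections $s_v$ and $s_{v'}$ on an overlap $U_v\cap U_{v'}$ differ by a continuous map into the full torus $T^{n+k}$, not merely by elements of the pointwise stabilizer $T_x$; over the interior of $P$ the stabilizer is trivial, yet the two sections have no reason to agree there. Consequently the clause ``after projection to $M(P,\lambda)$ via \eqref{eq_DJ_eq_rel} the two candidates become equal'' does not hold, and this shortcut would not produce a global section. The actual argument in \cite[Lemma~1.4]{DJ} instead constructs the section inductively over the (dual) cell structure of $P$, using that the obstructions to extension lie in homotopy groups of tori and vanish for dimensional reasons; the extra $T^k$-factor here is handled identically. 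Since you already defer to \cite[Lemma~1.4]{DJ} in your second paragraph, this is a misdescription of that lemma's proof rather than a structural gap in your plan.
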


We remark that   $M$ induces a smooth structure on  $M(P, \lambda)$ due to Corollary \ref{cor_two_def_are_equiv}.   Therefore,  $M(P, \lambda)$  is diffeomorphic to a locally $k$-standard $T$-manifold.  We also note that the orientation of $M(P, \lambda)$ can be induced from the orientation of $P$ and $T^{n+k}$.

\begin{lemma}\label{lem_H^1_finite_group}
Let $M$ be a  locally $k$-standard $T$-manifold with a hyper characteristic function $\lambda$ as defined in \eqref{eq_axiomatic_lambda}.  If ${\rm rk}(\lambda)= n+k$, then the fundamental group $\pi_1(M)$  is a finite abelian group.
\end{lemma}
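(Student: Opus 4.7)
The plan is to realize $M$ as the quotient of a simply connected moment-angle manifold by a compact abelian Lie group, and then read off $\pi_1(M)$ from the associated long exact sequence of homotopy groups. By Corollary~\ref{cor_two_def_are_equiv}, we may assume $M = M(P,\lambda)$. Writing $\mathcal{F}(P) = \{F_1,\ldots,F_m\}$, let $\mathcal{Z}_P \colonequals (T^m \times P)/\sim_s$ be the moment-angle manifold, obtained by using the standard basis $\{e_1,\ldots,e_m\}$ of $\ZZ^m$ as a hyper characteristic function on $P$. The assignment $e_i \mapsto \lambda_i$ extends to a homomorphism $\Lambda \colon T^m \to T^{n+k}$. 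Since ${\rm rk}(\lambda) = n+k$, the image $L \colonequals \im(\lambda) \subset \ZZ^{n+k}$ has full rank, so $\Lambda$ is surjective; set $K \colonequals \ker(\Lambda)$.

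First I would check that $K$ acts freely on $\mathcal{Z}_P$ with quotient $M(P,\lambda)$. Over the relative interior of a face $F_{i_1} \cap \cdots \cap F_{i_\ell}$, the $T^m$-isotropy is the subtorus generated by $\{e_{i_1},\ldots,e_{i_\ell}\}$; condition \eqref{eq_cond_for_char_pair} says that $\Lambda$ embeds this subtorus into $T^{n+k}$, so it intersects $K$ trivially and the $K$-action is free. A direct comparison of the gluing relations in $(T^m \times P)/\sim_s$ and $(T^{n+k} \times P)/\sim$ then identifies $\mathcal{Z}_P/K$ with $M(P,\lambda)$, exhibiting $\mathcal{Z}_P \to M$ as a principal $K$-bundle.

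Next I would invoke the standard fact that $\mathcal{Z}_P$ is simply connected (in fact $2$-connected) for any simple polytope $P$, proved in \cite{BP-book}. The long exact sequence of homotopy groups of the fibration $K \hookrightarrow \mathcal{Z}_P \to M$ then collapses to an isomorphism $\pi_1(M) \cong \pi_0(K)$. Applying the long exact sequence to the short exact sequence $0 \to K \to T^m \xrightarrow{\Lambda} T^{n+k} \to 0$ and using $\pi_0(T^m) = 0$ identifies $\pi_0(K)$ with the cokernel of the induced map $\Lambda_\ast \colon \ZZ^m \to \ZZ^{n+k}$, namely $\ZZ^{n+k}/L$. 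Since $L$ has full rank $n+k$, this cokernel is a finite abelian group, and hence so is $\pi_1(M) \cong \ZZ^{n+k}/\im(\lambda)$.

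The main technical care lies in step two: matching the identification spaces $\mathcal{Z}_P/K$ and $M(P,\lambda)$ face by face, and verifying that \eqref{eq_cond_for_char_pair} translates exactly into freeness of the $K$-action on $\mathcal{Z}_P$. Once the principal $K$-bundle structure is in place, the simply connectedness of $\mathcal{Z}_P$ is an available result and the identification of $\pi_0(K)$ with $\ZZ^{n+k}/L$ is a routine diagram chase.
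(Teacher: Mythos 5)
Your proof is correct, but it follows a genuinely different route from the paper. The paper works directly with the quotient map $\mathfrak{q}\colon T^{n+k}\times P\to M$ and invokes a general result of Calcut--Gompf--McCarthy \cite{CGM} on fundamental groups of quotient spaces (the fibers $\mathfrak{q}^{-1}(x)$ are connected, $T^{n+k}\times P$ is locally path-connected, and $M$ is semi-locally simply connected) to obtain a surjection $\pi_1(T^{n+k})\twoheadrightarrow\pi_1(M)$; it then observes that each class $\omega_i=\lambda_i$ dies because the circle $S^1(\lambda_i)\times\{p\}$, $p\in\mathrm{relint}(F_i)$, collapses to a point, so $\pi_1(M)$ is a quotient of $\ZZ^{n+k}/\langle\lambda_1,\dots,\lambda_m\rangle$ and hence finite abelian. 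You instead pass through the principal $K$-bundle $\mathcal{Z}_P\to\mathcal{Z}_P/K\cong M(P,\lambda)$ with $K=\ker(\exp\lambda)$, use the $2$-connectedness of $\mathcal{Z}_P$ from \cite{BP-book}, and read off $\pi_1(M)\cong\pi_0(K)\cong\ZZ^{n+k}/\im(\lambda)$ from the two homotopy exact sequences. Both arguments are sound; note that the identification $\mathcal{Z}_P/K\cong M(P,\lambda)$ you verify is essentially the paper's Proposition \ref{prop_axion=const}, which appears only after this lemma but is proved independently of it, so there is no circularity. Your approach buys more: it pins down $\pi_1(M)$ exactly as $\ZZ^{n+k}/\im(\lambda)$ rather than only exhibiting it as an unspecified quotient of that group, at the cost of relying on the bundle identification and the (standard, citable) simple connectivity of moment-angle manifolds; the paper's argument is softer and needs only the surjectivity statement of \cite{CGM}.
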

\begin{proof}
Let $\mathfrak{q} \colon T^{n+k} \times P \to M $ be the quotient map for the equivalence relation $\sim$ in \eqref{eq_DJ_eq_rel}. Then it follows that $\mathfrak{q}^{-1}(x)$ is connected for all $x \in M $. Also we have that  $ T^{n+k} \times P $ is locally path-connected and $M$ is semi-locally simply connected since it is locally $\CC^n \times T^k$.
Then \cite[Theorem 1.1]{CGM} gives a surjective map 
$$\pi_1(\mathfrak{q}) \colon \pi_1(T^{n+1} \times P) \twoheadrightarrow \pi_1(M).$$ 
Since $P$ is contractible, $ \pi_1 (T^{n+k} \times P) = \pi_1(T^{n+k}).$

Let $S^1(\lambda_i)$ be the circle subgroup of $T^{n+k}$ determined by $\lambda_i$ for $i = 1, \ldots, m$. So each $S^1(\lambda_i)$ is a loop in $T^{n+k}$ containing the identity. Let $\omega_i \in \pi_1(T^{n+k})$ represent this loop for $i=1, \ldots, m$. Then, $\omega_i = \lambda_{i,1}e_1 +  \cdots + \lambda_{i,n+k}e_{n+k}$ with respect to the standard generators $\{e_1, \ldots, e_{n+k}\}$ of $\pi_1(T^{n+k})$, where we denote $\lambda_i \colonequals (\lambda_{i,1}, \ldots, \lambda_{i,n+k})\in \ZZ^{n+k}$.  Since $\rk (\lambda)=n+k$, the quotient  
$\pi_1(T^{n+k})/ \left<\omega_1, \ldots, \omega_m\right>$ is a finite abelian group. Under the quotient map $\mathfrak{q}$, the circle $S^1(\lambda_i)$ collapses to a point in $M$. So  $\pi_1(\mathfrak{q})(\omega_i)$ is the identity in $\pi_1(M)$ for $i=1, \dots, m$. Hence, the lemma follows.
\end{proof}
Above lemma extends \cite[Theorem 1.1]{Ler2} which is about the fundamental group of contact toric manifolds of Reeb type. 

\begin{proposition}\label{prop_rank_n_n+1_classification}
Let $P$ and $\xi$ be as above. Then, $\rk (\xi)=n+r$ for some $0\leq r <k$ if and only if $M(P,\xi)$ is homeomorphic to $N(P, \tilde{\xi})\times T^{k-r}$ for some locally $r$-standard  $T$-manifold $N(P, \tilde{\xi})$ of dimension $2n+r$  with $\rk(\tilde{\xi})=n+r$. 
\end{proposition}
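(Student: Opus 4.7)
The plan is to produce the splitting explicitly by working with the saturation of $\im(\xi) \subset \ZZ^{n+k}$. For the forward direction, let $\Lambda := \im(\xi)$ and set $\bar{\Lambda} := (\Lambda \otimes \QQ) \cap \ZZ^{n+k}$; then $\bar{\Lambda}$ has rank $n+r$ and is a direct summand of $\ZZ^{n+k}$, so one may choose a complement $\ZZ^{n+k} = \bar{\Lambda} \oplus C$ with $C \cong \ZZ^{k-r}$. Exponentiating yields a product decomposition $T^{n+k} \cong T' \times T''$ with $T' \cong T^{n+r}$ and $T'' \cong T^{k-r}$.

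Next, I define $\tilde{\xi} \colon \mathcal{F}(P) \to \bar{\Lambda} \cong \ZZ^{n+r}$ by $\tilde{\xi}(F_j) := \xi(F_j)$, which makes sense since each $\xi_j \in \Lambda \subseteq \bar{\Lambda}$. I claim $(P, \tilde{\xi})$ is a hyper characteristic pair with $\rk(\tilde{\xi}) = n+r$. The rank statement is immediate from $\im(\tilde{\xi}) = \Lambda$. For condition \eqref{eq_cond_for_char_pair}, I would use the elementary fact that if $L$ is a direct summand of $\ZZ^N$ and $v_1, \ldots, v_t \in L$ form part of a $\ZZ$-basis of $\ZZ^N$, then they form part of a $\ZZ$-basis of $L$: pick any complement $K$ with $\ZZ^N = \langle v_1, \ldots, v_t\rangle \oplus K$; since $v_i \in L$, one checks $L = \langle v_1, \ldots, v_t\rangle \oplus (L \cap K)$ and $L \cap K$ is free. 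Applied to $L = \bar{\Lambda}$ at each vertex $v = F_{i_1} \cap \cdots \cap F_{i_n}$ of $P$, this gives that $\{\tilde{\xi}_{i_1}, \ldots, \tilde{\xi}_{i_n}\}$ is part of a $\ZZ$-basis of $\bar{\Lambda}$. Thus Proposition \ref{prop_two_definitions_are_equiv} applies to $(P, \tilde{\xi})$ and $N(P, \tilde{\xi})$ is locally $r$-standard of dimension $2n+r$.

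To produce the homeomorphism, I unwind the quotient construction \eqref{eq_constr_DJ} in the split coordinates $T^{n+k} = T' \times T''$. Since every $\xi_j \in \bar{\Lambda}$, the subtorus $T_p \subseteq T^{n+k}$ appearing in \eqref{eq_DJ_eq_rel} is contained in $T' \times \{1\}$ for each $p \in P$. Writing $t = (t_1, t_2), s=(s_1,s_2) \in T' \times T''$, the relation $\sim$ forces $t_2 = s_2$ and reduces on the $T'$-coordinate to the analogous relation $\sim'$ built from $\tilde{\xi}$. This yields
\begin{align*}
M(P, \xi) = (T' \times T'' \times P)/\sim &\cong \left[(T' \times P)/\sim'\right] \times T'' \\
&= N(P, \tilde{\xi}) \times T^{k-r},
\end{align*}
compatibly with the product $T^{n+r} \times T^{k-r}$-action.

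For the converse, assume $M(P, \xi) \cong N(P, \tilde{\xi}) \times T^{k-r}$ with $\rk(\tilde\xi)=n+r$, compatibly with the natural $T^{n+r} \times T^{k-r}$-action on the right-hand side. The facet submanifold $\mathfrak{q}^{-1}(F_j)$ then corresponds to $N_j \times T^{k-r}$, so the stabilizer circle $S^1(\xi_j) \subset T^{n+k}$ coincides with $S^1(\tilde{\xi}_j) \times \{1\} \subset T^{n+r} \times T^{k-r}$, which forces $\xi_j$ to lie in the rank-$(n+r)$ summand $\ZZ^{n+r} \oplus 0$ for every $j$, giving $\rk(\xi) = \rk(\tilde{\xi}) = n+r$. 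The main technical subtlety of the whole argument is that $\Lambda$ itself need not be saturated in $\ZZ^{n+k}$, which is what forces us to pass to $\bar{\Lambda}$ and to verify that the local primitivity condition \eqref{eq_cond_for_char_pair} lifts to the saturation.
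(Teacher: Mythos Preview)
Your forward direction is essentially identical to the paper's: both pass to the saturation of $\im(\xi)$ (the paper writes $M(\xi)=(\langle \xi_1,\dots,\xi_m\rangle \otimes_\ZZ \RR)\cap \ZZ^{n+k}$, you use $\QQ$, which is the same thing), choose a complementary summand, and observe that the identification $\sim$ only involves the first torus factor. Your explicit verification that $\tilde\xi$ still satisfies \eqref{eq_cond_for_char_pair} is a welcome detail the paper leaves implicit.

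The converse, however, takes a genuinely different route, and yours has a gap relative to the statement as written. The proposition only assumes that $M(P,\xi)$ is \emph{homeomorphic} to $N(P,\tilde\xi)\times T^{k-r}$, with no equivariance hypothesis. Your argument explicitly adds ``compatibly with the natural $T^{n+r}\times T^{k-r}$-action'' and then reads off $\xi_j$ from isotropy circles; without equivariance there is no reason the facet submanifolds or their stabilizers should correspond, so this step is unjustified.

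The paper avoids this by using fundamental groups. By Lemma~\ref{lem_H^1_finite_group}, since $\rk(\tilde\xi)=n+r$ the group $\pi_1(N(P,\tilde\xi))$ is finite, hence $\pi_1$ of the product is $\ZZ^{k-r}\times(\text{finite})$. On the other hand, applying the forward direction to $M(P,\xi)$ with $\rk(\xi)=n+r'$ gives $\pi_1(M(P,\xi))\cong \ZZ^{k-r'}\times(\text{finite})$. A mere homeomorphism forces the free ranks to agree, so $r'=r$. This is more robust: it uses only the homeomorphism type, exactly matching the hypothesis.
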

\begin{proof}
Assume that  $\rk(\xi)=n+r$ for some $0\leq r <k$. Let 
$$M(\xi)\colonequals (\langle \xi_1, \dots, \xi_m \rangle \otimes_\ZZ \RR) \cap \ZZ^{n+k}.$$
Then, $M(\xi)$ is a direct summand of $\ZZ^{n+k}$, hence we have $\ZZ^{n+k}\cong M(\xi)\oplus M(\xi)^\perp$. This induces a decomposition 
$T^{n+k}\cong T_{\xi}^{n+r} \times T_{\xi}^{\perp},$ where $T_{\xi}^{n+r}$ and $T_{\xi}^{\perp}$ are tori of dimension $n+r$ and $k-r$, respectively. 

Now, we have 
$$M(P, \xi)=(T\times P )/_\sim \cong ((T_{\xi}^{n+r} \times T_{\xi}^{\perp})\times P) / _\sim = 
(T_{\xi}^{n+r}\times P) / _\sim \times  T_{\xi}^{\perp},$$
where the last equality follows because the equivalent relation $\sim$ does not identify $T_{\xi}^{\perp}$. Here, we define $\tilde{\xi}$ by the composition 
$$
\begin{tikzcd}
\mathcal{F}(P) \arrow{r}{\xi} \arrow{rrd}[swap]{\tilde{\xi}}& \ZZ^{n+k} \arrow{r}{\cong} &M(\xi) \oplus M(\xi)^\perp \arrow{d}{pr_1}\\
&&M(\xi).
\end{tikzcd}
$$
This establishes ``only if'' part of the statement. 

Conversely, assume that $M(P, \xi)$ is equivariantly homeomorphic to $ T^{k-r}\times N(P, \tilde{\xi})$ for some locally $r$-standard  $T$-manifold $N(P, \tilde{\xi})$ with $\rk(\tilde\xi)=n+r$.  Observe that
\begin{equation}\label{eq_fund_gp}
\pi_1( T^{k-r}\times N(P, \tilde{\xi}))\cong \ZZ^{k-r} \times G
\end{equation}
for $G=\pi_1(N(P, \tilde{\xi}))$ which is a finite abelian group by Lemma \ref{lem_H^1_finite_group}. Therefore, if $\rk(\xi)\neq n+r$, then the proof of ``only if'' part and Lemma \ref{lem_H^1_finite_group} contradicts \eqref{eq_fund_gp}. Hence, we conclude $\rk(\xi)=n+r$. 
\end{proof}

Because of Proposition \ref{prop_rank_n_n+1_classification}, it suffices to assume $k \in \{0, \ldots, m-n\}$, since the maximum of $\rk(\xi)$ is $m$, namely the number of facets of $P$. In particular, if the hyper characteristic function $\xi$ is given by $\xi(F_i)=e_i$ which is the $i$-th standard unit vector in $\ZZ^m$, the resulting space $M(P, \xi)$ is called the \emph{moment-angle manifold} which we discuss below. 


Let $x$ be a point in the relative interior of $F_{j_1} \cap \cdots \cap F_{j_\ell}$. We write 
$$\widetilde{T}_x \colonequals \{(t_1, \dots, t_m)\in T^m \mid t_i = 1 ~\mbox{for} ~ i \notin \{j_1, \dots, j_\ell\}\}.$$
When $x$ belongs to the relative interior of $P$, we define $\widetilde{T}_x$ to be the identity in $T^m$.  One can define the identification space 
\begin{equation*}\label{eq_MAC}
\mathcal{Z}_P \colonequals (T^m \times P) /_{\sim_z}
\end{equation*}
which is called the \emph{moment-angle manifold} associated to $P$, see for instance \cite[Chapter 6]{BP-book}. Here, $(t, p)\sim_{z} (s,q)$ if and only if $p=q$ and $t^{-1}s \in \widetilde{T}_p$. Notice that $\mathcal{Z}_P$ is equipped with a $T^{m}$-action given by the coordinate multiplication on the first factor of  $T^m \times P$. This makes  $\mathcal{Z}_P$ a locally $(m-n)$-standard $T$-manifold. 

Assume that $\rk(\xi)=n+k$. Then, regarding $\xi$ as a matrix of size $(n+k)\times m$ by listing $\xi_1, \dots, \xi_m$  as its column vectors, we have a short exact sequence 
$$\begin{tikzcd} 1 \arrow{r} & \ker (\exp \xi) \arrow{r}& T^m \arrow{r}{\exp{\xi}}
& T^{n+k} \arrow{r} & 1.
\end{tikzcd}$$
Now, we consider the space 
\begin{equation}\label{eq_const_quotient}
X(P, \xi) \colonequals \mathcal{Z}_P/ \ker (\exp \xi),
\end{equation} 
where the action of $\ker (\exp \xi)$ factors through $T^m$. The torus $T^{n+k}
\cong T^m/\ker (\exp \xi)$ acts on $X(P, \xi)$ residually. Note that the condition $(\star)$ in Definition \ref{def_hyper_char} implies that $\ker(\exp\xi) \cap T^m_p =\{id\}$ 
for  every $p \in P$. Hence,  $\ker(\exp\xi)$ acts freely on $\mathcal{Z}_P$.

The proof of following proposition is same as the standard argument in toric topology, for instance see \cite[Proposition 7.2.1]{BP-book}.

\begin{proposition}\label{prop_axion=const}
Let $(P, \xi)$ be a hyper characteristic pair with $\rk (\xi)=n+k$. Then there is a weakly equivariant homeomorphism  between $M(P, \xi)$ in \eqref{eq_constr_DJ} and $X(P, \xi)$ in \eqref{eq_const_quotient}. 
\end{proposition}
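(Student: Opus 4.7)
The plan is to construct an explicit map $\widetilde\phi \colon \mathcal{Z}_P/\ker(\exp\xi) \to M(P,\xi)$ induced by the homomorphism $\exp\xi \colon T^m \to T^{n+k}$, and to show it is a weakly equivariant homeomorphism. First I would set $K \colonequals \ker(\exp\xi)$ and start from the level of product spaces by defining $\phi \colon T^m \times P \to T^{n+k} \times P$ via $(t,p) \mapsto (\exp\xi(t), p)$. The first check is that $\phi$ carries the relation $\sim_z$ into the relation $\sim$. This reduces to the identity $\exp\xi(\widetilde{T}_p) = T_p$ for every $p\in P$, which is immediate because, for $p$ in the relative interior of $F_{j_1}\cap\cdots\cap F_{j_\ell}$, the subtorus $\widetilde{T}_p$ is generated by the $S^1$-factors of $T^m$ indexed by $\{j_1,\dots,j_\ell\}$, and $\exp\xi$ sends these generators onto the circles determined by $\xi_{j_1},\dots,\xi_{j_\ell}$, which in turn generate $T_p$ by definition.

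Next I would observe that $\phi(tk,p) = \phi(t,p)$ for $k\in K$, so the induced map $\bar\phi \colon \mathcal{Z}_P \to M(P,\xi)$ factors through the free $K$-action to give $\widetilde\phi \colon \mathcal{Z}_P/K \to M(P,\xi)$. Surjectivity of $\widetilde\phi$ follows from the hypothesis $\rk(\xi)=n+k$, which forces $\exp\xi \colon T^m \twoheadrightarrow T^{n+k}$ to be surjective. For injectivity, suppose $\widetilde\phi[t_1,p_1] = \widetilde\phi[t_2,p_2]$. Then $p_1 = p_2 \equalscolon p$ and $\exp\xi(t_1^{-1}t_2) \in T_p$. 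Using that $\exp\xi$ maps $\widetilde{T}_p$ onto $T_p$, pick $u \in \widetilde{T}_p$ with $\exp\xi(u) = \exp\xi(t_1^{-1}t_2)$; then $t_1^{-1}t_2 \in u\cdot K \subseteq \widetilde{T}_p \cdot K$, so $(t_1 k, p) \sim_z (t_2, p)$ for some $k \in K$, whence $[t_1,p] = [t_2,p]$ in $\mathcal{Z}_P/K$. The condition $(\star)$, which ensures $K \cap \widetilde{T}_p = \{id\}$, guarantees that the $K$-action is free on $\mathcal{Z}_P$ and makes the quotient well-behaved, but is not strictly needed for injectivity of $\widetilde\phi$ itself.

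To upgrade the continuous bijection $\widetilde\phi$ to a homeomorphism I would invoke compactness of $\mathcal{Z}_P/K$ (since $T^m$ and $P$ are compact) and Hausdorffness of $M(P,\xi)$. For weak equivariance, $\phi$ is equivariant along the surjection $\exp\xi$, so $\widetilde\phi$ intertwines the residual $T^m/K$-action on $\mathcal{Z}_P/K$ with the $T^{n+k}$-action on $M(P,\xi)$ via the isomorphism $T^m/K \xrightarrow{\cong} T^{n+k}$ induced by $\exp\xi$; identifying the acting tori through this isomorphism makes $\widetilde\phi$ literally $T^{n+k}$-equivariant, which is a fortiori weakly equivariant. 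I expect the only subtle step to be verifying the lift $u \in \widetilde{T}_p$ in the injectivity argument, i.e.\ the surjectivity of $\exp\xi|_{\widetilde{T}_p} \colon \widetilde{T}_p \to T_p$, but this is a direct consequence of the definitions of the two subtori once $\xi$ is viewed as the matrix of column vectors $\xi_1,\dots,\xi_m$ and the relevant coordinate subtorus is identified.
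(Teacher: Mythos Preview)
Your proposal is correct and is precisely the standard argument the paper invokes: the paper does not give an explicit proof but refers to \cite[Proposition 7.2.1]{BP-book}, whose argument is exactly the construction you outline (pass from $T^m\times P$ to $T^{n+k}\times P$ via $\exp\xi$, check compatibility of the equivalence relations using $\exp\xi(\widetilde{T}_p)=T_p$, descend to a continuous bijection $\mathcal{Z}_P/K\to M(P,\xi)$, and conclude by compactness). Your identification of the ``weakness'' of the equivariance through the isomorphism $T^m/K\cong T^{n+k}$ induced by $\exp\xi$ also matches the paper's remark immediately following the proposition.
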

Here the “weakness” of equivariant homeomorphism arises by the existence of 
automorphism between the standard torus $T^{n+k}$ with residual torus $T^m/\ker(\exp \xi)$.

We now exhibit several classes of examples of  locally $k$-standard $T$-manifolds.
When $k=0$ in Definition \ref{def:axiom_top_cont}, the resulting category of manifolds is introduced in \cite{DJ} which are called quasitoric manifolds. The case where  $k  = |\mathcal{F}(P)| - \dim P=m-n$ contains all moment angle manifolds associated to  simple polytopes as we discussed above.  When $k=1$, the category of $M(P, \xi)$ contains \emph{good contact toric manifolds} introduced in \cite{Lerman}. See Example \ref{ex_good_contact}.


\begin{example}[Good contact toric manifolds]\label{ex_good_contact}
Let $P$ be an $n$-dimensional simple lattice polytope embedded in $\RR^{n+1}\setminus \{\mathbf{0}\}$. Consider the cone $C(P)$ on $P$ with apex $\mathbf{0}\in \mathbb{R}^{n+1}$, and  the set $\{\tilde{F} \mid F\in \mathcal{F}(P)\}$ of facets of $C(P)$, where $\tilde{F}\colonequals C(F)\setminus \{0\}$.  Now, define a function 
$\xi\colon \mathcal{F}(P) \to \mathbb{Z}^{n+1}$
by $\xi(F)$ to be the primitive outward normal vectors of $\tilde{F}$, and assume that $\xi$ satisfies the condition \eqref{eq_cond_for_char_pair}.  Then, the resulting space $M(P, \xi)$ is $T^{n+1}$-equivariantly homeomorphic to a good contact toric manifold whose moment cone is  $C(P)$. 
\end{example}

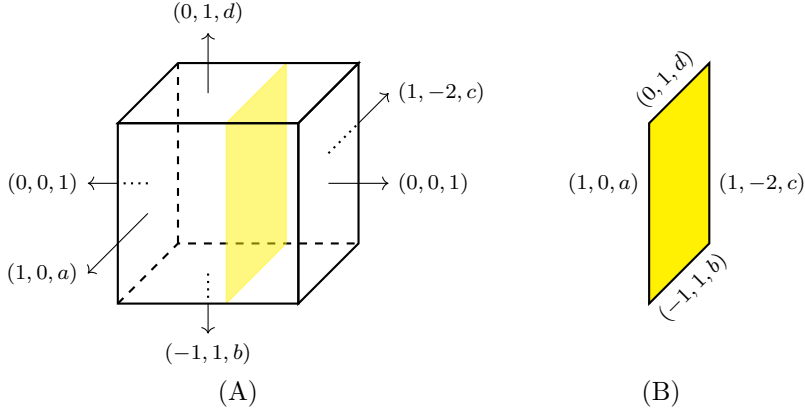
\begin{figure}
\begin{tikzpicture}[scale=0.8]
\begin{scope}
\draw[dashed, thick] (0,0)--(1,1)--(4,1); \draw[dashed, thick] (1,1)--(1,4);
\draw[fill=yellow, yellow, opacity=0.5] (1.8,0)--(2.8,1)--(2.8, 4)--(1.8,3)--cycle;
\draw[thick] (0,0)--(3,0)--(3,3)--(0,3)--cycle;
\draw[thick] (3,0)--(4,1)--(4,4)--(3,3)--cycle;
\draw[thick] (4,4)--(1,4)--(0,3)--(3,3)--cycle;

\draw[->] (1.5,3.5)--(1.5,4.5); 
\node[above] at (1.5, 4.5) {\footnotesize$(0,1,d)$};

\draw[thick, dotted] (1.5,0.5)--(1.5,0); \draw[->] (1.5, 0)--(1.5, -0.5);
\node[below] at (1.5, -0.5) {\footnotesize$(-1,1,b)$};

\draw[thick, dotted] (0.5,2)--(0,2); \draw[->] (0,2)--(-0.5, 2);
\node[left] at (-.5, 2) {\footnotesize$(0,0,1)$};

\draw[->] (3.5,2)--(4.5,2); 
\node[right] at (4.5, 2) {\footnotesize$(0,0,1)$};

\draw[->] (0.5,1.5)--(-.5,0.5); 
\node[left] at (-.5, 0.5) {\footnotesize$(1,0,a)$};

\draw[thick, dotted] (3.5,2.5)--(4,3); \draw[->] (4,3)--(4.5, 3.5);
\node[right] at (4.5, 3.5) {\footnotesize$(1,-2,c)$};

\node at (2,-1.5) {(A)};
\end{scope}

\begin{scope}[xshift=200]
\draw[fill=yellow, thick] (1.8,0)--(2.8,1)--(2.8, 4)--(1.8,3)--cycle;
\node[left] at (1.8, 2) {\footnotesize$(1,0,a)$};
\node[right] at (2.8, 2) {\footnotesize$(1,-2,c)$};
\node[above, rotate=45] at (2.3, 3.5) {\footnotesize$(0,1,d)$};
\node[below, rotate=45] at (2.3, 0.5) {\footnotesize$(-1,1,b)$};
\node at (2,-1.5) {(B) };
\end{scope}
\end{tikzpicture}
\caption{Hyperplane cut of a quasitoric manifold; (A) Characteristic pair of a quasitoric manifold, 
(B) Hyper characteristic pair of a  locally $1$-standard $T$-manifold of dimension $5$.}
\label{fig_hyperplane_cut}
\end{figure}

\begin{example}[Hyperplane cut of a quasitoric manifold] \label{ex:topocont_toric_from_toric}
Let $X$ be a $2n$-dimensional quasitoric manifold which is a locally 0-standard $T^{n}$-manifold and $\mathfrak{q}
\colon X \to Q$ be the associated orbit map. Let $H$ be a hyperplane in
$\RR^n$ which does not contain any vertex of $Q$. Since $Q$
 is an $n$-dimensional simple polytope, $P \colonequals Q \cap H$ is an $(n-1)$-dimensional
 simple polytope. Then $\mathfrak{q}^{-1}(P)$ is a $T^n$-invariant subspace
 of $X$. 
 
Note that if $v$ is a vertex of $P$, then $v$ is the intersection of an edge $e$ and $H$. Let $P_v$ be the open subset of $P$ obtained by deleting all faces of $P$ not containing $v$ and $Q_v$ be the open subset of $Q$ obtained by deleting  all faces of $Q$ not containing the edge $e$. Then $Q_v$ is homeomorphic to $P_v \times \mathring{e}$ as manifold with
 corners where $\mathring{e}$ is the relative interior of $e$.
So $\mathfrak{q}^{-1}(Q_v) = \mathfrak{q}^{-1}(P_v) \times \mathring{e}$.
Since $P_v$ and $\mathring{e}$ intersect transversally,
$\mathfrak{q}^{-1}(P_v)$ is a codimension-1 submanifold of  $\mathfrak{q}^{-1}(Q_v)$.
 Therefore, $\mathfrak{q}^{-1}(P)$ is a $(2n-1)$-dimensional manifold
 with  an effective $T^n$-action which satisfies the condition of Definition 
 \ref{def:axiom_top_cont}. Hence $\mathfrak{q}^{-1}(P)$ is a  locally $1$-standard $T$-manifold of dimension $2n-1$. We observe this in the following particular case.

Consider $Q$ and $P$ as described in Figure \ref{fig_hyperplane_cut}-(A)
and (B) respectively. The space $\mathfrak{q}^{-1}(P)$ is
a $S^1$-bundle over $\CP^2 \# \CP^2$.
We recall that good contact toric manifolds bijectively correspond to the moment cones
see \cite{Lerman}. Here, hyper characteristic vectors on $P$ do not satisfy the condition
of a moment cone. So $\mathfrak{q}^{-1}(P)$ is not a contact toric manifold, but in the category of the case where $k=1$. 
\end{example}

A similar procedure as in Example \ref{ex:topocont_toric_from_toric} applies to a  locally $k$-standard $T$-manifold,  which provides a  new locally $(k+1)$-standard $T$-manifold which is of codimension 1 in the original manifold.

\section{Equivariant cohomology of  locally $k$-standard $T$-manifolds}\label{sec_equiv_coh_of_tctm}
In this section, we study the equivariant cohomology algebra of a locally $k$-standard $T$-manifold $M(P,\xi)$. For simplicity, we write $M \colonequals M(P, \xi)$ and $T\colonequals T^{n+k}$ throughout the remaining part of this paper. 

\subsection{$H_T^\ast(M)$ as a ring}
Consider the set $V(P)\colonequals \{v_1, \dots, v_\ell\}$ of vertices of $P$ and write $v_i:=F_{i_1} \cap \dots \cap F_{i_n}$, the intersection of $n$ facets $F_{i_1}, \dots, F_{i_n}$. We denote by $T_{v_{i}}$ the subtorus of $T$ generated by $\{\xi_{i_1}, \dots, \xi_{i_n}\} \subset \ZZ^{n+k}$ and write $T^\perp_{v_i} \colonequals T/T_{v_i}$ which is isomorphic to the $k$-dimensional torus $T^k$. Then, we have a short exact sequence 
\begin{equation}\label{eq_ses_for_each_vertex}
\begin{tikzcd}
1 \arrow{r} & T_{v_i} \arrow[yshift=0ex]{r}{\alpha_i} & T \arrow{r} & T^\perp_{v_i} \arrow{r} & 1
\end{tikzcd}
\end{equation}
for each $i=1, \dots, \ell$. We notice that, for each vertex $v_i\in V(P)$ the constructive definition of $M$ shows $\pi^{-1}(v_i)=T^\perp_{v_i}\times v_i$. 

Identifying $H_2(BT)$ with the lattice $\left< e_1, \dots, e_{n+k}\right>$ generated by the standard unit vectors $\{e_1,\dots, e_{n+k}\}$ of $\ZZ^{n+k}$, we have  $H_2(BT_{v_i})\cong \left<\xi_{i_1}, \dots, \xi_{i_n}\right>$, the sublattice of $\ZZ^{n+k}$ generated by $\{\xi_{i_1}, \dots, \xi_{i_n}\}$. It is a direct summand of $\ZZ^{n+k}$ by the hypothesis \eqref{eq_cond_for_char_pair}. Hence, we get 
\begin{equation*}\label{eq_homology_H(BT)_decomp}
H_2(BT) \cong H_2(BT_{v_i}) \oplus N_{v_i}
\end{equation*}
for some $k$-dimensional $\ZZ$-submodule $N_{v_i}$ of $H_2(BT)$. This gives us the following identification of cohomology groups
\begin{equation}\label{eq_cohomology_H(BT)_decomp}
H^2(BT_{v_i})\cong H^2(BT)/ \Ann (H_2(BT_{v_i})),
\end{equation}
which is also isomorphic to the annihilator $\Ann (N_{v_i})$ of $N_{v_i}$.
\begin{lemma}\label{lem_H*(BT_v)=H*(S_v)}
$H^\ast_{T}(T^\perp_{v_i}\times v_i) \cong H^\ast(BT_{v_i})$. 
\end{lemma}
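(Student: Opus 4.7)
The plan is to identify $T^\perp_{v_i}\times v_i$ with the homogeneous space $T/T_{v_i}$ as a $T$-space and then apply the standard computation of the equivariant cohomology of an orbit.

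First, I would unpack the identification $T^\perp_{v_i}\times v_i \cong T/T_{v_i}$ as a $T$-space. By the constructive definition of $M(P,\xi)$ recalled just before the lemma, the fiber $\pi^{-1}(v_i)$ over the vertex $v_i$ is precisely $T^\perp_{v_i}\times v_i$, and the residual $T$-action is induced from the multiplication on the first factor of $T\times P$, modulo the isotropy subgroup $T_{v_i}$. In other words, using the short exact sequence \eqref{eq_ses_for_each_vertex}, the action of $T$ on $T^\perp_{v_i}$ is transitive with stabilizer $T_{v_i}$, so there is a $T$-equivariant homeomorphism $T^\perp_{v_i}\times v_i \cong T/T_{v_i}$.

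Next, I would invoke the standard Borel construction computation for a homogeneous space. Because $ET$ is a contractible free $T$-space, there is a homotopy equivalence
\[
ET\times_T (T/T_{v_i}) \;\simeq\; ET/T_{v_i}.
\]
Since $T_{v_i}$ acts freely on the contractible space $ET$, the quotient $ET/T_{v_i}$ is a model for the classifying space $BT_{v_i}$. Taking singular cohomology, this yields
\[
H^\ast_T(T^\perp_{v_i}\times v_i) \;\cong\; H^\ast(ET/T_{v_i}) \;\cong\; H^\ast(BT_{v_i}),
\]
which is the claimed isomorphism.

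There is no real obstacle here beyond properly identifying the $T$-equivariant structure; the only point that requires care is recognizing that the $T$-action on the orbit $T^\perp_{v_i}\times v_i$ in $M(P,\xi)$ really is the left translation action on $T/T_{v_i}$ and not merely the quotient action of $T^\perp_{v_i}$ on itself. Once that identification is made, the rest is the textbook fact that $H^\ast_G(G/H)\cong H^\ast(BH)$ for a closed subgroup $H\le G$. I would also note in passing that the isomorphism is canonical and compatible with the $H^\ast(BT)$-algebra structures via the restriction map $H^\ast(BT)\to H^\ast(BT_{v_i})$ induced by $\alpha_i$ in \eqref{eq_ses_for_each_vertex}, which will be needed when the lemma is used in later sections.
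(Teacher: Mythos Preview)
Your proof is correct and follows essentially the same approach as the paper: both compute the Borel construction of the orbit $T^\perp_{v_i}\times v_i$ and reduce it to $BT_{v_i}$. The only cosmetic difference is that the paper invokes the splitting $T\cong T_{v_i}\times T^\perp_{v_i}$ coming from hypothesis \eqref{eq_cond_for_char_pair} to decompose $ET\times_T T^\perp_{v_i}$ as a product, whereas you appeal directly to the general fact $H^\ast_G(G/H)\cong H^\ast(BH)$, which is arguably cleaner since it does not require the splitting.
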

\begin{proof}
Hypothesis \eqref{eq_cond_for_char_pair} gives that \eqref{eq_ses_for_each_vertex} is a split exact sequence, which yields an identification $T\cong T_{v_i} \times T^\perp_{v_i}$. Hence, we have 
\begin{align*}
H^\ast_{T}(T^\perp_{v_i}\times v_i) &=H^\ast(ET \times_{T} T^\perp_{v_i}) \\
&=H^\ast(ET_{v_i}\times_{T_{v_i}}(ET^\perp_{v_i} \times_{T^\perp_{v_i}}T^\perp_{v_i})\\
&=H^\ast(BT_{v_i}),
\end{align*}
where  the third equality holds, because $ET^\perp_{v_i}$ is contractible and $T^\perp_{v_i}$ acts freely on $T^\perp_{v_i}$. 
\end{proof}

Recall from Section \ref{sec_tctm} that each facet $F_j\in \mathcal{F}(P)$ is associated with a codimension $2$ submanifold $M_j$ which is fixed by the circle subgroup of $T$ generated by $\xi_j\in \ZZ^{n+k}\cong \Hom (S^1, T)$. Let $\tau_j\in H^2_T(M)$ be the equivariant Thom class of the normal bundle $\nu(M_j)$ of $M_j$ in $M$. To be more precise, it is the image of the identity in $H^{\ast}_{T}(M_j)$ via the equivariant Gysin homomorphism $H_{T}^\ast(M_j) \to H_{T}^{\ast+2}(M)$. 

Now, we consider the map 
\begin{equation}\label{eq_f_v_i}
f_{v_i}\colon H^\ast_T(M) \to H^\ast_T(T^\perp_{v_i})
\end{equation} 
induced from the inclusion $T^\perp_{v_i} \times v_i \hookrightarrow M$. Here we denote by $\tau_j|_{v_i}$ the image of $\tau_j$ via $f_{v_i}$. It satisfies the following property:

\begin{lemma}\label{lem_res_thom_class_first_lem}
For a vertex $v_i=F_{i_1}\cap \dots \cap F_{i_n}$ of $P$, 
\begin{enumerate}
\item  $\tau_j|_{v_i}=0$ if $j\notin \{i_1, \dots, i_n\}$; 
\item $\{\tau_{i_1}|_{v_i}, \ldots, \tau_{i_n}|_{v_i}\}$ is a linearly independent set. 
\end{enumerate}
\end{lemma}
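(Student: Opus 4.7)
The plan is to localize both assertions to the chart around $v_i$ afforded by Proposition~\ref{prop_two_definitions_are_equiv}. Since hypothesis~\eqref{eq_cond_for_char_pair} guarantees that $\{\xi_{i_1},\dots,\xi_{i_n}\}$ extends to a $\ZZ$-basis $\{\xi_{i_1},\dots,\xi_{i_n},\zeta_1,\dots,\zeta_k\}$ of $\ZZ^{n+k}$, there is a $T$-invariant open neighborhood $U_{v_i}\subset M$ of $T^\perp_{v_i}\times v_i$ and an automorphism $\theta_{v_i}$ of $T$ sending the standard basis of $\ZZ^{n+k}$ to this ordered basis such that $U_{v_i}$ is $\theta_{v_i}$-equivariantly diffeomorphic to an open $T$-invariant subset of $\CC^n\times T^k$. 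Under this identification, $M_{i_\ell}\cap U_{v_i}$ corresponds to $\{z_\ell=0\}\times T^k$ for each $\ell\in\{1,\dots,n\}$, whereas $M_j\cap U_{v_i}=\emptyset$ whenever $j\notin\{i_1,\dots,i_n\}$, since then $v_i\notin F_j$.

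Part (1) is then immediate: the inclusion $T^\perp_{v_i}\times v_i\hookrightarrow M$ factors through the open subset $M\setminus M_j$, and since $\tau_j$ is the image of $1\in H_T^0(M_j)$ under the equivariant Gysin homomorphism, it lies in the kernel of the restriction $H_T^\ast(M)\to H_T^\ast(M\setminus M_j)$. Hence $\tau_j|_{v_i}=0$.

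For part (2), because $T^\perp_{v_i}\times v_i\subset M_{i_\ell}$, the restriction $\tau_{i_\ell}|_{v_i}$ factors as $H_T^\ast(M)\to H_T^\ast(M_{i_\ell})\to H_T^\ast(T^\perp_{v_i}\times v_i)$; the first arrow sends $\tau_{i_\ell}$ to the equivariant Euler class $e(\nu(M_{i_\ell}))$ of the normal bundle, and the second restricts it to the fiber over $v_i$. In the chart $U_{v_i}$, the bundle $\nu(M_{i_\ell})|_{T^\perp_{v_i}\times v_i}$ is the equivariant line bundle on which $T$ acts through the character $\xi_{i_\ell}^\ast\in H^2(BT)\cong\Hom(T,S^1)$ dual to $\xi_{i_\ell}$ in the basis above (i.e., $\langle\xi_{i_\ell}^\ast,\xi_{i_m}\rangle=\delta_{\ell m}$ and $\langle\xi_{i_\ell}^\ast,\zeta_r\rangle=0$). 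Combining this with Lemma~\ref{lem_H*(BT_v)=H*(S_v)} and \eqref{eq_cohomology_H(BT)_decomp} yields
\[
\tau_{i_\ell}|_{v_i}=\xi_{i_\ell}^\ast|_{T_{v_i}}\in H^2(BT_{v_i}),
\]
and $\{\xi_{i_1}^\ast|_{T_{v_i}},\dots,\xi_{i_n}^\ast|_{T_{v_i}}\}$ is the $\ZZ$-dual basis of $\{\xi_{i_1},\dots,\xi_{i_n}\}$ in the rank-$n$ free abelian group $H^2(BT_{v_i})$, so the desired linear independence follows.

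The principal technical point is the identification of the equivariant Euler class of $\nu(M_{i_\ell})|_{T^\perp_{v_i}\times v_i}$ with $\xi_{i_\ell}^\ast|_{T_{v_i}}$. This reduces, after quotienting by the free $T^\perp_{v_i}$-action on the base, to the standard computation that the equivariant first Chern class of the $T_{v_i}$-equivariant line bundle $\CC\to\{pt\}$ with weight $\xi_{i_\ell}^\ast|_{T_{v_i}}$ equals $\xi_{i_\ell}^\ast|_{T_{v_i}}$; the other ingredients are bookkeeping provided by hypothesis~\eqref{eq_cond_for_char_pair} and \eqref{eq_cohomology_H(BT)_decomp}.
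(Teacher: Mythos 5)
Your proof is correct and follows essentially the same route as the paper: part (1) from the disjointness of $M_j$ and $\pi^{-1}(v_i)$ (you justify it via the support of the Gysin image, which is the right way to make the paper's one-line argument precise), and part (2) from the $T$-equivariant splitting of $\nu(T^\perp_{v_i})$ into the line bundles $\nu(M_{i_\ell})|_{T^\perp_{v_i}}$ together with the identification of their equivariant Euler classes. The only difference is that you make the final linear-independence step explicit by computing the weights as the dual basis of $\{\xi_{i_1},\dots,\xi_{i_n}\}$ in $H^2(BT_{v_i})$, which the paper leaves implicit here and records separately in Lemma~\ref{lem_rep_of_Thom_class} and Corollary~\ref{cor_rep_of_thom_classes_equation}.
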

\begin{proof}
For $j\notin \{i_1, \dots, i_n\}$, two subspaces $\pi^{-1}(F_j)=M_j$ and $\pi^{-1}(v_i)=T^\perp_{v_i}\times v_i$ do not intersect. Hence, the assertion (1) follows. 

To show  the second assertion, let $\nu(T^\perp_{v_i})$ be the normal bundle of $T^\perp_{v_i}\times v_i$ in $M$ and consider its $T$-equivariant decomposition 
\begin{equation}\label{eq_normal_bundle_decomp}
\nu(T^\perp_{v_i}) \cong \bigoplus_{k=1}^n \nu(M_{i_k})|_{T^\perp_{v_i}}. 
\end{equation}
Here, $\nu(M_{i_k})|_{T^\perp_{v_1}}$ denotes the restriction of $\nu(M_{i_k})$ to $T^\perp_{v_i}\times v_i$. The equivariant Euler class of $\nu(M_{i_k})|_{T^\perp_{v_i}}$ agrees with $\tau_{i_k}|_{v_i}\in H^2_T(T^\perp_{v_i})$ for each $k=1, \dots, n$ by the naturality. Hence, the claim follows from the equivariant decomposition in \eqref{eq_normal_bundle_decomp}. 
\end{proof}

We now discuss more properties on equivariant Thom classes for the preparation of the study in Section \ref{sec_equiv_cohom_rigidity}. 
Let  $\mathbf{x}_{j,i}\in H^2(BT)$ be a representative of $\tau_j|_{v_i}\in H^2_T(T^\perp_{v_i})\cong H^2(BT)/\Ann (H_2(BT_{v_i}))$ via \eqref{eq_cohomology_H(BT)_decomp} and Lemma \ref{lem_H*(BT_v)=H*(S_v)}. Furthermore,  we may take $\mathbf{x}_{j,i}=0$ for $j\notin \{i_1, \dots, i_n\}$ by Lemma \ref{lem_res_thom_class_first_lem}. Hence, we write $\mathbf{x}_{i_k}:=\mathbf{x}_{i_k, i}$ for simplicity. Identifying $H_2(BT)$ and $H_2(BT_{v_i})$ with $\ZZ^{n+k}$ and $\left< \xi_{i_1}, \dots, \xi_{i_n}\right>\subset \ZZ^{n+k}$ respectively, we regard $\mathbf{x}_{i_1}, \dots, \mathbf{x}_{i_n}$ as elements of $(\ZZ^{n+k})^\ast$.

\begin{lemma}\label{lem_rep_of_Thom_class}
For each vertex $v_i=F_{i_1}\cap \dots \cap F_{i_n}$, any representative $\mathbf{x}_{i_s}\in (\ZZ^{n+k})^\ast$ of $\tau_{i_s}|_{v_i}$ is an element in 
$$\Ann \left< \xi_{i_1}, \dots, \xi_{i_{s-1}},\xi_{i_{s+1}}, \dots,  \xi_{i_n}\right> \setminus \Ann \left<\xi_{i_1}, \dots, \xi_{i_n}\right>,$$
i.e., $\left<  \xi_{i_r}, \mathbf{x}_{i_s}\right>=0$ for all $r\in \{1, \dots, n\}\setminus \{s\}$ and $\left< \xi_{i_s}, \mathbf{x}_{i_s}\right>\neq 0$. 
\end{lemma}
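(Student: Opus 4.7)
The plan is to compute the equivariant first Chern class of the normal line bundle $\nu(M_{i_s})|_{T^\perp_{v_i}\times v_i}$ explicitly using the local $k$-standard model at $v_i$, and then translate the result into the required conditions on $\mathbf{x}_{i_s}$. From the proof of Lemma \ref{lem_res_thom_class_first_lem} we already know that, under the isomorphism $H^2_T(T^\perp_{v_i}\times v_i)\cong H^2(BT_{v_i})$ of Lemma \ref{lem_H*(BT_v)=H*(S_v)}, the class $\tau_{i_s}|_{v_i}$ equals the equivariant Euler class of this normal line bundle, so the task reduces to identifying the $T_{v_i}$-weight of the fiber.

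First, I would invoke Definition \ref{def:axiom_top_cont} to choose a $T$-invariant neighborhood of $T^\perp_{v_i}\times v_i$ that is $\theta$-equivariantly diffeomorphic to $\CC^n\times T^k$. Complete $\{\xi_{i_1},\ldots,\xi_{i_n}\}$ to a $\ZZ$-basis $\{\xi_{i_1},\ldots,\xi_{i_n},\zeta_1,\ldots,\zeta_k\}$ of $\ZZ^{n+k}$ (possible by \eqref{eq_cond_for_char_pair}), and let $\theta\in\Aut(T)$ be the automorphism determined on lattices by $e_s\mapsto \xi_{i_s}$ and $e_{n+r}\mapsto \zeta_r$. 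In the local model, $M_{i_s}$ corresponds to the coordinate slice $\{z_s=0\}\times T^k$, whose normal direction is the $s$-th $\CC$-factor on which the standard $T^{n+k}$-action has character $e_s^\ast$.

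Next, pulling this character back through $\theta$, the $T$-weight of $\nu(M_{i_s})|_{T^\perp_{v_i}\times v_i}$ equals $e_s^\ast\circ\theta^{-1}=\xi_{i_s}^\vee\in(\ZZ^{n+k})^\ast$, where $\{\xi_{i_1}^\vee,\ldots,\xi_{i_n}^\vee,\zeta_1^\vee,\ldots,\zeta_k^\vee\}$ is the basis dual to $\{\xi_{i_1},\ldots,\xi_{i_n},\zeta_1,\ldots,\zeta_k\}$. Thus the equivariant Euler class of this line bundle, viewed in $H^2(BT)\cong(\ZZ^{n+k})^\ast$, is $\pm\xi_{i_s}^\vee$ (the sign reflecting only the orientation convention for complex line bundles). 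Its image in $H^2(BT_{v_i})\cong H^2(BT)/\Ann\langle\xi_{i_1},\ldots,\xi_{i_n}\rangle$ represents $\tau_{i_s}|_{v_i}$.

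Finally, any representative $\mathbf{x}_{i_s}\in(\ZZ^{n+k})^\ast$ of $\tau_{i_s}|_{v_i}$ differs from $\pm\xi_{i_s}^\vee$ by an element of $\Ann\langle\xi_{i_1},\ldots,\xi_{i_n}\rangle$, which is a $\ZZ$-linear combination of $\zeta_1^\vee,\ldots,\zeta_k^\vee$ and therefore vanishes on every $\xi_{i_r}$. Consequently $\langle\xi_{i_r},\mathbf{x}_{i_s}\rangle=\pm\xi_{i_s}^\vee(\xi_{i_r})=\pm\delta_{rs}$, which gives both $\langle\xi_{i_r},\mathbf{x}_{i_s}\rangle=0$ for $r\neq s$ and $\langle\xi_{i_s},\mathbf{x}_{i_s}\rangle\neq 0$, as claimed. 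The main subtlety I expect is keeping the direction of $\theta$ (and the induced transpose on characters) straight so that the dual basis vector that appears is indeed $\xi_{i_s}^\vee$ rather than $\xi_{i_s}$ itself; apart from this bookkeeping, the argument is a standard equivariant Euler-class computation on a fixed orbit.
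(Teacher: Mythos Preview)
Your argument is correct and follows essentially the same idea as the paper: both identify $\tau_{i_s}|_{v_i}$ with the equivariant Euler class of $\nu(M_{i_s})|_{T^\perp_{v_i}\times v_i}$ and then read off the pairings $\langle\xi_{i_r},\mathbf{x}_{i_s}\rangle$ from the weight of the $T$-action on that line. The paper's proof stays at the level of the abstract decomposition \eqref{eq_normal_bundle_decomp} together with the character--weight formula \eqref{eq_char_weight}, observing that the circle $\lambda_{\xi_{i_r}}$ acts trivially on the summand $\nu(M_{i_s})|_{T^\perp_{v_i}}$ for $r\neq s$ and nontrivially for $r=s$. You instead pass to the local $k$-standard chart and compute the weight explicitly as $\pm\xi_{i_s}^\vee$ in the dual basis. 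This is a little more work but has the advantage of yielding, for free, the sharper conclusion $\langle\xi_{i_r},\mathbf{x}_{i_s}\rangle=\pm\delta_{rs}$, which is precisely the content of Corollary~\ref{cor_rep_of_thom_classes_equation}; the paper needs a separate (short) paragraph after the lemma to obtain that refinement. The bookkeeping concern you flag about the direction of $\theta$ is handled correctly: with $\theta_\ast(e_s)=\xi_{i_s}$ one has $(\theta^\ast)^{-1}e_s^\ast=\xi_{i_s}^\vee$, exactly as you use it.
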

\begin{proof}
Recall the following identifications;
\begin{enumerate}
\item $H_2(BT)\cong \ZZ^{n+k} \cong \Hom(S^1, T)$. We denote by $\lambda_\xi \in \Hom(S^1, T)$ the weight corresponding to $\xi\in \ZZ^{n+k} \cong H_2(BT)$. 
\item $H^2(BT)\cong (\ZZ^{n+k})^\ast \cong \Hom(T, S^1)$. We denote by $\chi^{\zeta}\in \Hom(T, S^1)$ the character corresponding to $\zeta \in (\ZZ^{n+k})^\ast \cong H^2(BT)$. 
\item For each $t\in S^1$, we have 
\begin{equation}\label{eq_char_weight}
(\chi^{\zeta}\circ \lambda_\xi)(t)=t^{\left<  \xi, \zeta \right>}, 
\end{equation}
where $\left<~,~ \right>$ denotes the standard paring between elements in $\ZZ^{n+k}$ and its dual $(\ZZ^{n+k})^\ast$. 
\end{enumerate}
Now, the equivariant decomposition of $\nu(T^\perp_{v_i})$ as in \eqref{eq_normal_bundle_decomp} together with \eqref{eq_char_weight} implies that 
$\left<  \xi_{i_r}, \mathbf{x}_{i_s}\right>=0$ for all $r\in \{1, \dots, n\}\setminus \{s\}$ and $\left< \xi_{i_s},  \mathbf{x}_{i_s}\right>\neq 0$. 
\end{proof}

In addition to Lemma \ref{lem_rep_of_Thom_class}, one can choose a particular representative $\mathbf{x}_{i_s}\in (\ZZ^{n+k})^\ast$ of $\tau_{i_s}|_{v_i}$ such that $\left<  \xi_{i_s}, \mathbf{x}_{i_s}\right>=1.$
Indeed, we extend hyper characteristic vectors $\{\xi_{i_1}, \dots, \xi_{i_n}\}$ around a vertex $v_i$ to a basis $\{\xi_{i_1}, \dots, \xi_{i_n}, \eta_1, \dots, \eta_k \}$ of $\ZZ^{n+k}$. Then, we take first $n$ elements of $\{\mathbf{x}_{i_1}, \dots, \mathbf{x}_{i_n},\mathbf{x}_{i_{n+1}}, \dots,  \mathbf{x}_{i_{n+k}}\}\subset (\ZZ^{n+k})^\ast$ which is dual to $\{\xi_{i_1}, \dots, \xi_{i_n}, \eta_1, \dots, \eta_k \}$.  With this observation, we get the following conclusion. 

\begin{corollary} \label{cor_rep_of_thom_classes_equation}
For each vertex $v_i=F_{i_1} \cap \dots \cap F_{i_n}$ and associated set 
\begin{equation}\label{eq_set_of_thom_cl_assoc_v_i}
\{\tau_{i_1}|_{v_i}, \dots, \tau_{i_n}|_{v_i} \} \subset H^2_T(T^\perp_{v_i})\cong (\ZZ^{n+k})^\ast /\Ann\left<\xi_{i_1}, \dots, \xi_{i_n}\right>
\end{equation}
of restrictions of equivariant Thom classes, there is a set $\{\mathbf{x}_{i_1}, \dots, \mathbf{x}_{i_n}\}\subset (\ZZ^{n+k})^\ast$ of representatives of \eqref{eq_set_of_thom_cl_assoc_v_i} such that 
\begin{equation}\label{eq_rep_of_thom_classes_equation}
\left<  \xi_{i_r}, \mathbf{x}_{i_s}\right>=\begin{cases} 
0 & \text{if } r\neq s ;\\ 
1 & \text{if } r=s. 
\end{cases}
\end{equation}
In particular, the submodule generated by such a set $\{\mathbf{x}_{i_1}, \dots, \mathbf{x}_{i_n}\}$ is a direct summand of $(\ZZ^{n+k})^\ast$. 
\end{corollary}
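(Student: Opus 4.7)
The plan is to upgrade the conclusion of Lemma \ref{lem_rep_of_Thom_class}, which pins down each representative $\mathbf{x}_{i_s}$ only up to the unknown nonzero integer $\langle \xi_{i_s}, \mathbf{x}_{i_s}\rangle$, to a coherent simultaneous choice realizing \eqref{eq_rep_of_thom_classes_equation}. The construction first produces an algebraic candidate from a dual basis and then verifies it coincides with the geometric class $\tau_{i_s}|_{v_i}$.

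First I would invoke the hypothesis \eqref{eq_cond_for_char_pair}: since $\{\xi_{i_1}, \dots, \xi_{i_n}\}$ is a part of a $\ZZ$-basis of $\ZZ^{n+k} \cong H_2(BT)$, pick $\eta_1, \dots, \eta_k \in \ZZ^{n+k}$ extending it to a full $\ZZ$-basis $\{\xi_{i_1}, \dots, \xi_{i_n}, \eta_1, \dots, \eta_k\}$. Let $\{\mathbf{x}_{i_1}, \dots, \mathbf{x}_{i_n}, \mathbf{y}_1, \dots, \mathbf{y}_k\}$ denote the dual $\ZZ$-basis in $(\ZZ^{n+k})^\ast \cong H^2(BT)$. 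By construction $\langle \xi_{i_r}, \mathbf{x}_{i_s}\rangle = \delta_{rs}$, which is exactly \eqref{eq_rep_of_thom_classes_equation}. The direct summand assertion follows immediately, since $\{\mathbf{x}_{i_1}, \dots, \mathbf{x}_{i_n}\}$ extends to a $\ZZ$-basis of the dual with complement generated by $\{\mathbf{y}_1, \dots, \mathbf{y}_k\}$.

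The remaining content is to verify that this algebraic $\mathbf{x}_{i_s}$ actually represents $\tau_{i_s}|_{v_i}$ in the quotient $(\ZZ^{n+k})^\ast/\Ann\langle \xi_{i_1}, \dots, \xi_{i_n}\rangle$. By Lemma \ref{lem_rep_of_Thom_class} any genuine representative lies in the same coset as $c \cdot \mathbf{x}_{i_s}$ for some nonzero integer $c$, and the task is to pin down $|c|=1$. Here I would unpack the naturality identity that expresses $\tau_{i_s}|_{v_i}$ as the equivariant Euler class of the complex line $\nu(M_{i_s})|_{T^\perp_{v_i}}$ appearing in the decomposition \eqref{eq_normal_bundle_decomp}: local $k$-standardness identifies this line with a one-dimensional $T$-representation whose weight is primitive and dual to $\xi_{i_s}$, so by \eqref{eq_char_weight} its equivariant Euler class pairs as $\pm 1$ against $\xi_{i_s}$. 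Absorbing the sign into the choice of $\pm \mathbf{x}_{i_s}$ (equivalently, into the orientation of the normal bundle) closes the argument.

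The main obstacle is this last step: the dual-basis construction is transparent linear algebra, but ruling out a nontrivial integer multiple in the diagonal pairing requires the weight computation on a single summand of $\nu(T^\perp_{v_i})$ and relies crucially on (i) the primitivity of $\xi_{i_s}$ built into the hyper characteristic function and (ii) the local model $\CC^n \times T^k$, both of which force the relevant slice representation to be the standard one-dimensional representation.
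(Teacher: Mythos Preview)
Your approach is essentially the same as the paper's: extend $\{\xi_{i_1}, \dots, \xi_{i_n}\}$ to a $\ZZ$-basis of $\ZZ^{n+k}$ and take the first $n$ elements of the dual basis as the representatives $\mathbf{x}_{i_s}$. The paper records exactly this construction in the paragraph preceding the corollary and leaves implicit the verification (via the local model and the weight computation \eqref{eq_char_weight}) that these dual elements genuinely represent the $\tau_{i_s}|_{v_i}$, a point you spell out more carefully.
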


We finish this subsection by showing the following theorem about the ring structure of the equivariant cohomology of $M$.  We recall that the equivariant cohomology $H^\ast_{T^m}(\mathcal{Z}_P)$ is isomorphic to 
\begin{equation}\label{eq_face_ring}
{\rm SR}(P)\colonequals \ZZ[y_1, \dots, y_m]/\left< y_{i_1}\cdots y_{i_r} \mid F_{i_1}\cap \cdots \cap F_{i_r}=\emptyset \right>,
\end{equation}
which is called the \emph{Stanley--Reisner ring} of $P$, see \cite[Section 4]{DJ}. Here, $y_i$'s are indeterminates of degree 2.

\begin{theorem}\label{thm_equiv=SR}
Let $M\colonequals M(P, \xi)$ be a  locally $k$-standard $T$-manifold of dimension $2n+k$ such that $\im(\xi)=\ZZ^{n+k}$. Let $\tau_j$  be the equivariant Thom class of $\nu(M_j)$ for $j=1, \dots, m$. Then the equivariant cohomology  $H^*_{T}(M)$ is isomorphic to 
\begin{equation}\label{eq_SR_with_thom_class}
\mathbb{Z}[\tau_1, \dots, \tau_m]/\left< \tau_{j_1}\cdots \tau_{j_r} \mid F_{j_1}\cap \cdots \cap F_{j_r}=\emptyset \right> 
\end{equation}
as rings. 
\end{theorem}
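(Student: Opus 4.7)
The plan is to reduce to the computation of the equivariant cohomology of a moment-angle manifold, which is a classical result in toric topology. Since $\im(\xi) = \ZZ^{n+k}$ forces $\rk(\xi) = n+k$, Proposition \ref{prop_axion=const} provides a weakly equivariant homeomorphism between $M$ and $\mathcal{Z}_P / K$, where $K \colonequals \ker(\exp \xi)$ acts freely on $\mathcal{Z}_P$ and the residual action of $T \cong T^m / K$ on the quotient agrees with the given $T$-action on $M$. The problem then becomes the computation of $H^\ast_T(\mathcal{Z}_P / K)$.

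The second step is to compare the Borel constructions for the $T^m$-action on $\mathcal{Z}_P$ and the $T$-action on $\mathcal{Z}_P / K$. Modelling $ET^m$ by $ET \times EK$, one obtains the chain of homotopy equivalences
\[
ET^m \times_{T^m} \mathcal{Z}_P \;\simeq\; ET \times_T \bigl(EK \times_K \mathcal{Z}_P\bigr) \;\simeq\; ET \times_T \bigl(\mathcal{Z}_P / K\bigr) \;=\; ET \times_T M,
\]
where the middle equivalence uses that $K$ acts freely on $\mathcal{Z}_P$, so that $EK \times_K \mathcal{Z}_P$ is homotopy equivalent to $\mathcal{Z}_P / K$. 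Applying $H^\ast(-)$ yields a ring isomorphism $H^\ast_T(M) \cong H^\ast_{T^m}(\mathcal{Z}_P)$. Invoking the classical computation of the equivariant cohomology of a moment-angle manifold (see \cite[Section 4]{DJ} and \cite[Chapter 7]{BP-book}), the right-hand side is isomorphic to ${\rm SR}(P)$, with the generator $y_j$ identified with the equivariant Thom class of the facet submanifold $Z_j \subset \mathcal{Z}_P$ for each $j$.

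The remaining step is to identify each Thom class generator of ${\rm SR}(P)$ with $\tau_j \in H^2_T(M)$. The quotient map $q \colon \mathcal{Z}_P \to M$ restricts to a principal $K$-bundle $Z_j \to M_j$, and the $T^m$-equivariant normal bundle of $Z_j$ in $\mathcal{Z}_P$ is the $q$-pullback of the $T$-equivariant normal bundle of $M_j$ in $M$, viewed as a $T^m$-equivariant bundle via $T^m \twoheadrightarrow T$. Naturality of the equivariant Gysin homomorphism with respect to this pullback square then forces the Thom class of $Z_j$ to correspond to $\tau_j$ under the isomorphism of the second step. The main obstacle I anticipate is in this final bookkeeping: one must verify that the chain of homotopy equivalences in the second step genuinely identifies Thom classes, rather than merely producing a ring isomorphism that could \emph{a priori} differ by an automorphism of ${\rm SR}(P)$.
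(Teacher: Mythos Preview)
Your reduction to $H^\ast_{T^m}(\mathcal{Z}_P)$ via the chain of homotopy equivalences is exactly the argument the paper gives: they split $T^m \cong K \times T$ using $\im(\xi)=\ZZ^{n+k}$, write $ET^m \times_{T^m}\mathcal{Z}_P \simeq ET\times_T M$, and quote $H^\ast_{T^m}(\mathcal{Z}_P)\cong {\rm SR}(P)$.

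Where you diverge is in the last step. You propose to pin down the generators by tracking the equivariant Thom classes of $Z_j\subset\mathcal{Z}_P$ through the homotopy equivalence using naturality of the Gysin map along $q\colon \mathcal{Z}_P\to M$. The paper does \emph{not} do this; instead it verifies directly, inside $H^\ast_T(M)$, that the $\tau_j$ are linearly independent (by restricting to the orbits $T^\perp_{v_i}$ over each vertex and invoking Lemma~\ref{lem_res_thom_class_first_lem}) and that they satisfy the Stanley--Reisner relations (since $\tau_{j_1}\cdots\tau_{j_r}$ is the equivariant Poincar\'e dual of $M_{j_1}\cap\cdots\cap M_{j_r}$), and then concludes that $y_j\mapsto\tau_j$ gives the isomorphism. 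Your route is arguably more conceptual and sidesteps the restriction lemmas, at the cost of exactly the bookkeeping you flag: one must check that under $EK\times_K\mathcal{Z}_P\simeq \mathcal{Z}_P/K$ the subspaces $EK\times_K Z_j$ and $M_j$ correspond with matching normal bundles, which is straightforward since $K$ acts freely and the projection is a fibre bundle with contractible fibre. The paper's route has the advantage that the restriction maps $f_{v_i}$ and Lemma~\ref{lem_res_thom_class_first_lem} are reused later to analyse the $H^\ast(BT)$-algebra structure and prove the rigidity theorem, so the investment pays off beyond Theorem~\ref{thm_equiv=SR}.
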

\begin{proof}
A hyper characteristic function $\xi$ of rank $n+k$, regarded as a matrix of size $(n+k) \times m$ whose column vectors are indexed by facets of $P$, yields a short exact sequence 
\begin{equation*}\label{eq_ses_lattice}
\begin{tikzcd}
0 \arrow{r} & \ker \xi \arrow{r} & \ZZ^m \arrow{r}{\xi} & \ZZ^{n+k} \arrow{r} & 0.
\end{tikzcd}
\end{equation*}
Since $\ZZ^{n+k}$ is a free $\ZZ$-module, the above short exact sequence splits. This also implies that the following short exact sequence of tori 
$$
\begin{tikzcd}
1 \arrow{r} & K \arrow{r}{\iota} & T^m \arrow{r}{\exp \xi} & T\arrow{r}& 1
\end{tikzcd}
$$
splits, where $K\colonequals \ker (\exp \xi)$ which is also equal to $\exp (\ker \xi)$ because $\im(\xi)=\ZZ^{n+k}$. 

Now, we consider the following identifications
\begin{align}\label{eq_Borel_ZP_Borel_lsktm}
\begin{split}
ET^m\times_{T^m} \mathcal{Z}_P &\cong (EK \times ET)\times_{K\times T} \mathcal{Z}_P \\
&\simeq (EK \times ET)\times_{T} \mathcal{Z}_P/K\\
&\simeq EK \times (ET\times_{T} \mathcal{Z}_P/K)\\
&\simeq ET\times_{T} M.
\end{split}
\end{align}
  Hence, we conclude that 
\begin{equation*}\label{eq_equiv_cohom=SR}
H^\ast_T(M) \cong {\rm SR}(P).
\end{equation*}

Now, it remains to verify the relation between Thom classes $\tau_j$'s of \eqref{eq_SR_with_thom_class} and $y_j$'s of ${\rm SR}(P)$. First, note that $\{\tau_1, \dots, \tau_m\}$ is linearly independent. Indeed, 
$$f_{v_i}\left(\sum_{j=1}^m a_i \tau_i \right) = \sum_{j=1}^m a_{j} \tau_{j}|_{v_i}
= \sum_{r=1}^n a_{i_r} \tau_{i_r}|_{v_i}\in H_T^2(T^\perp_{v_i}),$$
where $f_{v_i}$ is defined in \eqref{eq_f_v_i} and the second equality follows from Lemma \ref{lem_res_thom_class_first_lem}-(1). Hence, if $\sum_{j=1}^m a_i \tau_i=0$, then $\sum_{k=1}^n a_{i_k} \tau_{i_k}|_{v_i}=0$. This implies that $a_{i_1}=\cdots =a_{i_n}=0$ by  Lemma \ref{lem_res_thom_class_first_lem}-(2). The same procedures for other vertices establish $a_1=\cdots = a_m=0$. 

Next, the cup product $\tau_{i_1}\cap \cdots \cap \tau_{i_r}$ vanishes whenever $F_{i_1} \cap \cdots \cap F_{i_r}=\emptyset$, because $\tau_{i_1}\cap \cdots \cap \tau_{i_r}$ represents the equivariant Poincare dual of the intersection $M_{i_1} \cap \dots \cap M_{i_r}$. Hence, one has an isomorphism between \eqref{eq_SR_with_thom_class} and  ${\rm SR}(P)$ by sending $\tau_j$ to $y_j$ for $j=1, \dots m$. 
\end{proof}

The authors of \cite{DJ} showed that the Borel construction of a quasitoric manifold and the Borel construction of the corresponding moment angle manifold are same as in \eqref{eq_Borel_ZP_Borel_lsktm}. Then they use a certain decomposition of $\mathcal{Z}_P$ arising from the cubical decomposition of $P$ and Mayer--Vietories sequence to prove  that the Equivariant cohomology of $\mathcal{Z}_P$ is isomorphic to ${\rm SR}(P)$. 
 In addition to it we showed in Theorem \ref{thm_equiv=SR} that the generators of ${\rm SR}(P)$ can be identified with the equivariant Thom classes $\tau_j$'s which holds for a class of locally $k$-standard $T$-manifolds containing quasitoric manifolds and moment angle manifolds.

\subsection{Algebra structure}
Note that $H^\ast_T(M)$ is equipped with $H^\ast(BT)$-algebra structure induced from the Borel fibration 
\begin{equation}\label{eq_Borel_fibration}
\begin{tikzcd}
M \arrow[hook]{r} &ET\times_T M \arrow{r}{\pi}& BT.
\end{tikzcd}
\end{equation}
The following lemma allows us to see $H^2(BT)$ as a subset of $H^2_T(M)$.

\begin{lemma}
Let $M$ be a  locally $k$-standard $T$-manifold of dimension $ 2n+k$ with the hyper characteristic function $\xi$ and assume that $\rk(\xi)=n+k$. Then, $H^2(BT)$ is a subgroup of $H_T^2(M)$. Moreover, the inclusion of $H^2(BT)$ into $H^2_T(M)$ is the homomorphism $\pi^\ast \colon H^2(BT) \to H^2_T(M)$. 
\end{lemma}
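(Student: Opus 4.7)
The plan is to analyze the Leray--Serre spectral sequence of the Borel fibration \eqref{eq_Borel_fibration}, whose $E_2$-page is $E_2^{p,q} = H^p(BT; H^q(M))$. Since $BT$ is simply connected, the local coefficient system is trivial. The homomorphism $\pi^\ast \colon H^2(BT) \to H^2_T(M)$ is precisely the edge homomorphism corresponding to the inclusion $E_\infty^{2,0} \hookrightarrow E_2^{2,0} = H^2(BT)$. Consequently, to show that $\pi^\ast$ is injective on $H^2(BT)$, it suffices to verify that the only potentially nonzero differential targeting $E_r^{2,0}$, namely $d_2 \colon E_2^{0,1} \to E_2^{2,0}$, vanishes. (For $r \geq 3$, the relevant source $E_r^{2-r,r-1}$ sits in negative horizontal degree, hence is automatically zero.)

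The source of $d_2$ is $H^1(M;\ZZ)$. This is where the hypothesis $\rk(\xi) = n+k$ is used: it triggers Lemma \ref{lem_H^1_finite_group}, which asserts that $\pi_1(M)$ is a finite abelian group. Hence $H_1(M;\ZZ) \cong \pi_1(M)^{\mathrm{ab}}$ is finite, and the Universal Coefficient Theorem yields
\[
H^1(M;\ZZ) \cong \Hom(H_1(M;\ZZ),\ZZ) = 0,
\]
since a free abelian group of rank zero has no nontrivial homomorphisms from a finite group to $\ZZ$. Therefore $d_2 = 0$, and $E_\infty^{2,0} = E_2^{2,0} = H^2(BT)$.

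Putting this together, $\pi^\ast \colon H^2(BT) \to H^2_T(M)$ is injective, so it realizes $H^2(BT)$ as a subgroup of $H^2_T(M)$, and by construction the inclusion is exactly $\pi^\ast$. The only delicate point is the vanishing of $H^1(M;\ZZ)$; once that is in hand via Lemma \ref{lem_H^1_finite_group}, the spectral sequence argument is formal.
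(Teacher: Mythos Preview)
Your proof is correct and follows essentially the same approach as the paper: both invoke the Leray--Serre spectral sequence of the Borel fibration, use Lemma \ref{lem_H^1_finite_group} together with Hurewicz and the Universal Coefficient Theorem to get $H^1(M;\ZZ)=0$, and conclude that $E_2^{2,0}=E_\infty^{2,0}$ so that $\pi^\ast$ is the injective edge homomorphism. One minor slip in wording: the edge map factors as $H^2(BT)=E_2^{2,0}\twoheadrightarrow E_\infty^{2,0}\hookrightarrow H^2_T(M)$, so $E_\infty^{2,0}$ is a quotient (not a subgroup) of $E_2^{2,0}$; your argument is unaffected since the vanishing of the incoming $d_2$ makes this surjection an isomorphism.
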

\begin{proof}
The cohomology Leray--Serre spectral sequence for the fibration \eqref{eq_Borel_fibration}  gives us 
$$E^{p,q}_2 = H^p(BT;H^q(M;\ZZ)),$$
where the system of local coefficients is simple, because $BT$ is simply connected. We refer to \cite[Proposition 5.20]{McC}. Notice that $E_2^{2,0}= H^2(BT;\ZZ)$ and $E_2^{0,1}=0$ by Lemma \ref{lem_H^1_finite_group} together with Hurewicz theorem and the universal coefficient theorem, see for instance \cite[Theorem 3.4, Corollary 7.3]{Bre-TG}. Therefore, the differential 
$$d_2^{1,0} \colon E_2^{0,1} \to E_2^{2,0}$$
is a zero map, which implies that 
$$H^2(BT)=E_2^{2,0}=E_3^{2,0} = \cdots = E_\infty^{2,0} \subset H_T^2(M).$$
Now, the second assertion directly follows from \cite[Theorem 5.9]{McC}. 
\end{proof}

Hence, for each element $u\in H^2(BT)$, we have 
\begin{equation}\label{eq_algebra_str}
\pi^\ast(u)=u=\sum_{j=1}^m w_j(u)\cdot \tau_j=\sum_{j=1}^m \left<\widetilde{w}_j, u \right> \tau_j
\end{equation}
for some linear map 
$$w_j \colon H^2(BT) \to \ZZ.$$ 
Here, $\widetilde{w}_j\in \ZZ^{n+k}$ and $\left< ~, ~ \right>$ denote the element  corresponding to $w_i$ via the identifications of $\Hom (H^2(BT), \ZZ)\cong \Hom ((\ZZ^{n+k})^\ast, \ZZ) \cong \ZZ^{n+k}$ and  the standard paring between $\ZZ^{n+k}$ and $(\ZZ^{n+k})^\ast$, respectively. 

\begin{lemma}\label{lem_relation_alg_str_coeff_and_char_vec}
Let $\{\mathbf{x}_{i_1}, \dots, \mathbf{x}_{i_n}\}\subset \ZZ^{n+k}$ be the set as in Corollary \ref{cor_rep_of_thom_classes_equation}. Then, $\{\widetilde{w}_{i_1}, \dots, \widetilde{w}_{i_n}\}$ satisfies 
\begin{equation}\label{eq_tilde_w_dual_basis_relation}
\left< \widetilde{w}_{i_r},  \mathbf{x}_{i_s}\right>=\begin{cases} 
0 & \text{if } r\neq s;\\ 
1 & \text{if } r=s.
\end{cases}
\end{equation}
\end{lemma}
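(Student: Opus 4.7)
My plan is to apply the restriction map $f_{v_i}\colon H^\ast_T(M)\to H^\ast_T(T^\perp_{v_i})$ to both sides of \eqref{eq_algebra_str} and then extract the coefficients $\widetilde{w}_{i_r}$ using the duality from Corollary~\ref{cor_rep_of_thom_classes_equation}. In fact, the computation will show the stronger identity $\widetilde{w}_{i_s}=\xi_{i_s}$ in $\ZZ^{n+k}$ for each $s=1,\dots,n$, from which \eqref{eq_tilde_w_dual_basis_relation} is immediate by specializing $u=\mathbf{x}_{i_s}$ and invoking \eqref{eq_rep_of_thom_classes_equation}.

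First, I would fix an arbitrary $u\in H^2(BT)\cong (\ZZ^{n+k})^\ast$ and apply $f_{v_i}$ to the relation
$$\pi^\ast(u)=\sum_{j=1}^m \left<\widetilde{w}_j, u\right>\tau_j.$$
By Lemma~\ref{lem_res_thom_class_first_lem}(1), the restriction $\tau_j|_{v_i}$ vanishes for $j\notin\{i_1,\dots,i_n\}$, so the right-hand side collapses to $\sum_{r=1}^n\left<\widetilde{w}_{i_r},u\right>\tau_{i_r}|_{v_i}$. For the left-hand side, naturality together with the splitting $T\cong T_{v_i}\times T^\perp_{v_i}$ from the proof of Lemma~\ref{lem_H*(BT_v)=H*(S_v)} shows that $f_{v_i}\circ\pi^\ast\colon H^\ast(BT)\to H^\ast(BT_{v_i})$ is induced by the inclusion $T_{v_i}\hookrightarrow T$, which under \eqref{eq_cohomology_H(BT)_decomp} is exactly the quotient projection $(\ZZ^{n+k})^\ast\to (\ZZ^{n+k})^\ast/\Ann\left<\xi_{i_1},\dots,\xi_{i_n}\right>$. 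Thus $f_{v_i}(\pi^\ast u)$ is the class $[u]$ in this quotient.

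Substituting the representatives $\mathbf{x}_{i_r}$ of $\tau_{i_r}|_{v_i}$, the resulting equation is
$$[u]=\sum_{r=1}^n\left<\widetilde{w}_{i_r},u\right>[\mathbf{x}_{i_r}]\quad\text{in}\quad (\ZZ^{n+k})^\ast/\Ann\left<\xi_{i_1},\dots,\xi_{i_n}\right>.$$
Since $\xi_{i_s}$ annihilates $\Ann\left<\xi_{i_1},\dots,\xi_{i_n}\right>$, pairing with $\xi_{i_s}$ descends to a well-defined functional on the quotient. Doing so and using \eqref{eq_rep_of_thom_classes_equation} to compute $\left<\xi_{i_s},\mathbf{x}_{i_r}\right>=\delta_{sr}$ yields $\left<\xi_{i_s},u\right>=\left<\widetilde{w}_{i_s},u\right>$ for every $u\in(\ZZ^{n+k})^\ast$, hence $\widetilde{w}_{i_s}=\xi_{i_s}$. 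Then \eqref{eq_tilde_w_dual_basis_relation} follows by another application of \eqref{eq_rep_of_thom_classes_equation}.

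I do not expect a serious obstacle; the argument is essentially a naturality check combined with the duality already built into the chosen representatives $\mathbf{x}_{i_r}$. The only point requiring care is verifying that the composition $f_{v_i}\circ\pi^\ast$ really becomes the quotient projection under our identifications, which is a routine consequence of the splitting $T\cong T_{v_i}\times T^\perp_{v_i}$ together with Lemma~\ref{lem_H*(BT_v)=H*(S_v)}.
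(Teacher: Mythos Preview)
Your argument is correct and rests on the same commutative diagram the paper uses: both identify $f_{v_i}\circ\pi^\ast$ with the quotient projection $(\ZZ^{n+k})^\ast\to(\ZZ^{n+k})^\ast/\Ann\langle\xi_{i_1},\dots,\xi_{i_n}\rangle$. The difference is in how the conclusion is extracted. The paper plugs in the specific elements $u=\mathbf{x}_{i_s}$, obtains $\tau_{i_s}|_{v_i}=\sum_r\langle\widetilde{w}_{i_r},\mathbf{x}_{i_s}\rangle\,\tau_{i_r}|_{v_i}$, and reads off \eqref{eq_tilde_w_dual_basis_relation} from the linear independence of the $\tau_{i_r}|_{v_i}$. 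You instead keep $u$ arbitrary and pair the quotient identity with $\xi_{i_s}$, which yields the stronger conclusion $\widetilde{w}_{i_s}=\xi_{i_s}$; the lemma then follows from \eqref{eq_rep_of_thom_classes_equation}. Your route thus anticipates (the local case of) Theorem~\ref{thm_recovering_char_vec}, which the paper proves afterwards using this lemma as input; in effect you have reversed the logical order of Lemma~\ref{lem_relation_alg_str_coeff_and_char_vec} and Theorem~\ref{thm_recovering_char_vec}. Both arguments are equally short, and the only point needing justification---that $f_{v_i}\circ\pi^\ast$ is the projection---is asserted without proof in the paper as well, so your brief remark on this is adequate.
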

\begin{proof}
Consider the following commutative diagram,
\begin{equation}\label{eq_algebra_structure_deg2_linear}
\begin{tikzcd}[row sep=scriptsize]
H^2(BT)\arrow[hook]{r}{\pi^\ast} \arrow{d}{\cong} &H^2_T(M) \arrow{r}{f_{v_i}}  & H^2_T(T^\perp_{v_i}) \arrow{d}{\cong}\\
(\ZZ^{n+k})^\ast  \arrow[two heads]{rr}{pr_{v_i}}&& (\ZZ^{n+k})^\ast/\Ann\left< \xi_{i_1}, \dots, \xi_{i_n}\right>
\end{tikzcd}
\end{equation}
where $\pi^\ast$ and $f_{v_i}$ are defined in \eqref{eq_algebra_str} and \eqref{eq_f_v_i}, respectively and $pr_{v_i}$ is the projection. 
Now, we evaluate $\mathbf{x}_{i_k}\in (\ZZ^{n+k})^\ast$. Then, we have 
\begin{equation}\label{eq_algebra_computation}
f_{v_i}(\pi^\ast(\mathbf{x}_{i_s}))=f_{v_i}(\mathbf{x}_{i_s})
=f_{v_i}\left( \sum_{j=1}^m \left< \widetilde{w}_j, \mathbf{x}_{i_s} \right> \tau_j \right)
=\sum_{r=1}^n\left<  \widetilde{w}_{i_r}, \mathbf{x}_{i_s}\right> \tau_{i_r}|_{v_i}
\end{equation}
and $pr_{v_i}(\mathbf{x}_{i_s})=\tau_{i_s}|_{v_i}$. Hence the commutativity of the diagram \eqref{eq_algebra_structure_deg2_linear} shows 
$$\tau_{i_s}|_{v_i}=\sum_{r=1}^n\left<  \widetilde{w}_{i_r}, \mathbf{x}_{i_s} \right> \tau_{i_r}|_{v_i},$$
which implies the desired relation \eqref{eq_tilde_w_dual_basis_relation}, because the set $\{\tau_{i_1}|_{v_i}, \dots, \tau_{i_n}|_{v_i}\}$ is linearly independent, see Lemma \ref{lem_res_thom_class_first_lem}. 
\end{proof}

We note that relations \eqref{eq_rep_of_thom_classes_equation} and  \eqref{eq_tilde_w_dual_basis_relation} are independent from the choice of a vertex of $P$. To be more precise, if two distinct vertices $v_i$ and $v_\ell$ are contained in a common facet $F_j$, then both $\xi_j$ in \eqref{eq_rep_of_thom_classes_equation} and $\widetilde{w}_j$ in \eqref{eq_tilde_w_dual_basis_relation} are related to two different 
set of dual elements $\mathcal{B}_1\colonequals \{\mathbf{x}_{i_1}, \dots, \mathbf{x}_{i_n}\}$ and $\mathcal{B}_2\colonequals \{\mathbf{y}_{\ell_1}, \dots, \mathbf{y}_{\ell_n}\}$ corresponding to restrictions of equivariant Thom classes around $v_i$ and $v_\ell$, respectively. Two relations  \eqref{eq_rep_of_thom_classes_equation} and  \eqref{eq_tilde_w_dual_basis_relation}  holds for both $\mathcal{B}_1$ and $\mathcal{B}_2$. 

Now, we prove the following theorem. 

\begin{theorem}\label{thm_recovering_char_vec}
Let $M$ be a  locally $k$-standard $T$-manifold of dimension $2n+k$ with $\rk(\xi)=n+k$. 
Let $\widetilde{w}_j$ be the vector as in \eqref{eq_algebra_str}. Then, 
$\widetilde{w}_j = \xi_j$ for each $j=1, \dots, m$.
\end{theorem}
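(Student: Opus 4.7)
The plan is to extend the two-way computation already carried out in the proof of Lemma~\ref{lem_relation_alg_str_coeff_and_char_vec} from the specific representatives $\mathbf{x}_{i_s}$ to an arbitrary class $z \in H^2(BT) \cong (\ZZ^{n+k})^\ast$. Since every facet of $P$ is contained in at least one vertex, it suffices to show that $\widetilde{w}_{i_r} = \xi_{i_r}$ for every vertex $v_i = F_{i_1} \cap \cdots \cap F_{i_n}$ of $P$ and every $r \in \{1, \ldots, n\}$.

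Fix such a vertex $v_i$ and an arbitrary $z \in (\ZZ^{n+k})^\ast$. On one hand, combining the expansion \eqref{eq_algebra_str} with Lemma~\ref{lem_res_thom_class_first_lem}(1), which forces $\tau_j|_{v_i} = 0$ whenever $F_j$ is not incident to $v_i$, yields
\[
f_{v_i}(\pi^\ast(z)) \;=\; \sum_{r=1}^{n} \langle \widetilde{w}_{i_r}, z\rangle\, \tau_{i_r}|_{v_i}.
\]
On the other hand, the commutativity of the diagram \eqref{eq_algebra_structure_deg2_linear} gives $f_{v_i}(\pi^\ast(z)) = pr_{v_i}(z)$. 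By Corollary~\ref{cor_rep_of_thom_classes_equation}, the representatives $\{\mathbf{x}_{i_1}, \ldots, \mathbf{x}_{i_n}\}$ of $\{\tau_{i_r}|_{v_i}\}$ are dual to $\{\xi_{i_1}, \ldots, \xi_{i_n}\}$, so under the natural identification $H^2_T(T^\perp_{v_i}) \cong \Hom(H_2(BT_{v_i}), \ZZ)$ the classes $\tau_{i_r}|_{v_i}$ form precisely the dual basis of $\{\xi_{i_r}\}$. Since $pr_{v_i}(z)$ corresponds to the linear functional $\xi \mapsto \langle \xi, z\rangle$ on $H_2(BT_{v_i})$, expanding in this dual basis gives
\[
pr_{v_i}(z) \;=\; \sum_{r=1}^{n} \langle \xi_{i_r}, z\rangle\, \tau_{i_r}|_{v_i}.
\]

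Equating the two expressions for $f_{v_i}(\pi^\ast(z))$ and invoking the linear independence of $\{\tau_{i_r}|_{v_i}\}$ from Lemma~\ref{lem_res_thom_class_first_lem}(2) produces $\langle \widetilde{w}_{i_r}, z\rangle = \langle \xi_{i_r}, z\rangle$ for every $z$ and every $r$, and the non-degeneracy of the pairing between $\ZZ^{n+k}$ and $(\ZZ^{n+k})^\ast$ then forces $\widetilde{w}_{i_r} = \xi_{i_r}$. The one piece of bookkeeping to handle with care is the dual basis identification used in the second computation: it follows from Corollary~\ref{cor_rep_of_thom_classes_equation} in exactly the same manner as it was used inside Lemma~\ref{lem_relation_alg_str_coeff_and_char_vec}, but now applied to a general cohomology class rather than only to the Thom class representatives.
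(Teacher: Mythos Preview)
Your proof is correct and follows essentially the same approach as the paper: both use the commutative diagram \eqref{eq_algebra_structure_deg2_linear} to equate two expressions for $f_{v_i}(\pi^\ast(z))$ and then invoke the linear independence from Lemma~\ref{lem_res_thom_class_first_lem}(2). The only cosmetic difference is that the paper splits the verification of $\langle \widetilde{w}_j - \xi_j, \mathbf{x}\rangle = 0$ into two cases according to the decomposition $(\ZZ^{n+k})^\ast \cong \langle \mathbf{x}_{i_1},\dots,\mathbf{x}_{i_n}\rangle \oplus \Ann\langle \xi_{i_1},\dots,\xi_{i_n}\rangle$ (handling the annihilator piece directly and citing Corollary~\ref{cor_rep_of_thom_classes_equation} and Lemma~\ref{lem_relation_alg_str_coeff_and_char_vec} for the other), whereas you treat both at once by expanding $pr_{v_i}(z)$ in the dual basis $\{\tau_{i_r}|_{v_i}\}$; your packaging is slightly more streamlined but the content is the same.
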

\begin{proof}
We claim that 
\begin{equation}\label{eq_claim_of_main_thm_case1}
\left< \widetilde{w}_j - \xi_j, \mathbf{x} \right> =0, \text{ for all } \mathbf{x}\in (\ZZ^{n+k})^\ast. 
\end{equation}
Take a vertex $v$ of given facet $F_j$ and assume $v=F_{i_1}\cap \cdots \cap F_{i_n}$, namely, $j$ is an element of $\{i_1, \dots, i_n\}$. Then, Corollary \ref{cor_rep_of_thom_classes_equation} gives us a decomposition
$$(\ZZ^{n+k})^\ast \cong \left< \mathbf{x}_{i_1}, \dots, \mathbf{x}_{i_n} \right> \oplus \Ann \left<\xi_{i_1}, \dots, \xi_{i_n}\right>. $$ 
For elements $\mathbf{x} \in \Ann \left<\xi_{i_1}, \dots, \xi_{i_n}\right>$, we apply \eqref{eq_algebra_computation} to get
$$
0=f_{v}(\pi^\ast(\mathbf{x}))=f_{v}(\mathbf{x})
=f_{v}\left( \sum_{j=1}^m \left< \widetilde{w}_j, \mathbf{x} \right> \tau_j \right)
=\sum_{r=1}^n\left<  \widetilde{w}_{i_r}, \mathbf{x}\right> \tau_{i_r}|_{v}.
$$
Since the set $\{\tau_{i_1}|_{v}, \dots, \tau_{i_n}|_{v}\}$ is linearly independent, we have $\left<\widetilde{w}_{i_r}, \mathbf{x}\right>=0$ for all $r=1, \dots, n$. In particular, $\left< \widetilde{w}_j, \mathbf{x}\right>=0$ because $j$ is an element of $\{i_1, \dots, i_n\}$. Moreover, $\left<\xi_{j}, \mathbf{x} \right>=0$ because $\mathbf{x} \in \Ann \left<\xi_{i_1}, \dots, \xi_{i_n}\right>$. Hence, the assertion \eqref{eq_claim_of_main_thm_case1} has been established for $\mathbf{x}\in \Ann \left<\xi_{i_1}, \dots, \xi_{i_n}\right>$. For elements $\mathbf{x}\in  \left< \mathbf{x}_{i_1}, \dots, \mathbf{x}_{i_n} \right>$, the claim follows from Corollary \ref{cor_rep_of_thom_classes_equation} and Lemma \ref{lem_relation_alg_str_coeff_and_char_vec}. Hence, the result follows. 

\end{proof}

\section{Equivariant cohomological rigidity}\label{sec_equiv_cohom_rigidity}
In this section, we answer the equivariant cohomological rigidity problem for the category of  locally $k$-standard $T$-manifolds. The main theorem (Theorem \ref{thm_main_rigidity}) states that the weak isomorphism classes of the equivariant cohomology distinguishes the weak homeomorphism classes of locally $k$-standard $T$-manifolds. Here, weak homeomorphism between two $T$-manifolds $M$ and $M'$ means a homeomorphism $\Psi \colon M\to M'$ such that $\Psi(t \cdot x)= \delta(t)\cdot \Psi(x)$ for arbitrary $t\in T$ and $x\in M$ and for some automorphism $\delta\in \Aut(T)$. A weak isomorphism between two equivariant cohomology algberas are similarly defined with respect to an automorphism of $H^\ast(BT)$.

We first prepare the following proposition which tells about the combinatorics of the simple polytopes from their Stanley-Reisner rings. It can be deduced from the result of \cite{BrGu} as well as the relationship between simple convex polytope and its dual simplicial complex.   
\begin{proposition}\label{prop_brgu}
Let $P$ and $P'$ be simple polytopes with facets $ \mathcal{F}(P) $ and  $ \mathcal{F}(P')$ respectively. Suppose  ${\rm SR}(P)$ and ${\rm SR}(P')$ are isomorphic as $\ZZ$-algebras (see \eqref{eq_face_ring}). Then there is  a bijection $\phi \colon \mathcal{F}(P) \to \mathcal{F}(P')$ which induces a  face preserving homeomorphism $\bar \phi \colon P\to P'$. 
\end{proposition}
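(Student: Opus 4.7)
The plan is to translate $\mathrm{SR}(P)$ into the Stanley--Reisner ring of the dual simplicial sphere of $P$, invoke the rigidity theorem of Bruns--Gubeladze to recover that sphere from its ring, and finally convert the resulting simplicial isomorphism into a face-preserving homeomorphism of polytopes.

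For the combinatorial setup, I would let $K_P$ be the nerve complex whose vertex set is $\mathcal{F}(P)$ and whose simplices are the tuples $\{F_{i_1}, \ldots, F_{i_r}\}$ with $F_{i_1} \cap \cdots \cap F_{i_r} \neq \emptyset$. For a simple polytope, $K_P$ is precisely the boundary complex $\partial P^{\ast}$ of the dual simplicial polytope, hence a simplicial $(n-1)$-sphere, and comparing with \eqref{eq_face_ring} one has $\mathrm{SR}(P) = \mathbb{Z}[K_P]$ in the standard sense (and similarly for $P'$). The theorem of \cite{BrGu} then applies: an isomorphism of Stanley--Reisner rings as $\mathbb{Z}$-algebras forces a combinatorial isomorphism of the underlying complexes, producing a simplicial isomorphism $\Phi \colon K_P \to K_{P'}$ whose restriction to vertices is the desired bijection $\phi \colon \mathcal{F}(P) \to \mathcal{F}(P')$; by definition this $\phi$ preserves nonempty intersections of facets.

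To upgrade $\phi$ to a face-preserving homeomorphism $\bar\phi \colon P \to P'$, I would observe that the face lattice of $P$ is anti-isomorphic to the simplex poset of $K_P$ with a maximum element adjoined, so $\Phi$ determines an isomorphism of face lattices $\mathcal{L}(P) \cong \mathcal{L}(P')$. Since $P$ and $P'$ are simple polytopes, this combinatorial equivalence upgrades to a PL homeomorphism by induction on the skeleton: match vertices via $\phi$, extend over each edge, then apply the Alexander trick face by face (using that each such face is a PL-ball whose boundary homeomorphism has already been constructed at the previous stage). I expect the main technical hurdle to lie in the appeal to \cite{BrGu}: that result is typically stated for graded isomorphisms over a field, so one must verify that the canonical grading---equivalently, that the set of generators $\{y_1, \ldots, y_m\}$ up to units is intrinsic to the $\mathbb{Z}$-algebra structure of $\mathrm{SR}(P)$---and that the argument passes to the integral setting. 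Once the combinatorial type is recovered, the remaining geometric extension is routine.
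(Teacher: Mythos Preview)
Your proposal is correct and follows precisely the route the paper indicates: the paper does not give a detailed proof of this proposition but simply remarks that it ``can be deduced from the result of \cite{BrGu} as well as the relationship between simple convex polytope and its dual simplicial complex,'' which is exactly the two-step argument you outline (pass to the nerve/dual simplicial sphere, apply Bruns--Gubeladze, then translate the simplicial isomorphism back to a face-lattice isomorphism and a PL homeomorphism). Your worry about graded versus ungraded isomorphisms is in fact the content of \cite{BrGu}: their theorem is that an arbitrary $k$-algebra isomorphism of Stanley--Reisner rings (not assumed graded) already forces an isomorphism of the underlying complexes, so no extra work is needed there.
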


 Recall from Corollary \ref{cor_two_def_are_equiv} and Proposition \ref{prop_axion=const} that a locally $k$-standard $T$-manifold is determined by its hyper characteristic pair. Now, we introduce our main results.

\begin{theorem}\label{thm_main_rigidity}
Let $M$ and $M'$ be locally $k$-standard $T$-manifolds associated with $(P,\xi)$ and $(P, \xi')$ respectively such that $\im(\xi)$ and $\im(\xi')$ are direct summands of $\ZZ^{n+k}$. Then, $M$ and $M'$ are weakly equivariantly  homeomorphic if and only if their equivariant cohomology algebras are weakly isomorphic. 
\end{theorem}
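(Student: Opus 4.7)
The ``only if'' direction is a standard functorial statement: any weakly equivariant homeomorphism $(\Psi, \delta)$ between $M$ and $M'$ induces, through the Borel construction, a weak isomorphism of $H^\ast(BT)$-algebras with twist $B\delta^\ast$. My plan for the converse has three successive steps: (a) identify the orbit polytopes and produce a bijection of facets, (b) extract an automorphism $\delta \in \Aut(T)$ intertwining the two hyper characteristic functions, and (c) assemble a weak equivariant homeomorphism from the constructive model of Corollary \ref{cor_two_def_are_equiv}.

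For step (a), I will start with a weak isomorphism $\varphi \colon H^\ast_T(M') \to H^\ast_T(M)$ twisted by some $\delta^\ast \in \Aut(H^\ast(BT))$. Theorem \ref{thm_equiv=SR} identifies $H^\ast_T(M) \cong \mathrm{SR}(P)$ and $H^\ast_T(M') \cong \mathrm{SR}(P')$ as graded rings, with the degree-two generators realized by the equivariant Thom classes $\tau_j$ and $\tau'_j$. Proposition \ref{prop_brgu} then supplies a bijection $\phi \colon \mathcal{F}(P') \to \mathcal{F}(P)$ together with a face-preserving homeomorphism $\bar\phi \colon P' \to P$. Unpacking this reconstruction, I will argue that $\varphi(\tau'_j) = \epsilon_j \tau_{\phi(j)}$ for some signs $\epsilon_j \in \{\pm 1\}$, exploiting the fact that, in Stanley--Reisner rings of simple polytopes, the Thom class generators are characterized up to sign by the minimal monomial Stanley--Reisner relations.

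For step (b), I will combine the algebra formula \eqref{eq_algebra_str} with Theorem \ref{thm_recovering_char_vec}. The weak $H^\ast(BT)$-linearity condition $\varphi \circ (\pi')^\ast = \pi^\ast \circ \delta^\ast$ expands, via $\pi^\ast(u) = \sum_j \langle \xi_j, u\rangle \tau_j$ and its counterpart for $M'$, into coefficient equations on the $\tau_{\phi(j)}$'s that force $\xi'_j = \epsilon_j \delta_\ast(\xi_{\phi(j)})$, where $\delta_\ast$ is the lattice automorphism dual to $\delta^\ast$; this $\delta_\ast$ then corresponds to an automorphism $\delta \in \Aut(T)$. For step (c), I will define $\tilde\Psi \colon T\times P\to T\times P'$ by $\tilde\Psi(t,p) := (\delta(t), \bar\phi^{-1}(p))$. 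Because $\delta$ sends the isotropy subgroup $T_p$ (generated by the $\xi_j$'s with $p\in F_j$) precisely onto $T'_{\bar\phi^{-1}(p)}$, with the signs $\epsilon_j$ being irrelevant at the level of circle subgroups, $\tilde\Psi$ will descend to the desired weak equivariant homeomorphism $\Psi \colon M \to M'$ with twist $\delta$.

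The hardest part will be the identification $\varphi(\tau'_j) = \epsilon_j \tau_{\phi(j)}$ in step (a). The combinatorial rigidity of \cite{BrGu} recovers the polytope from $\mathrm{SR}(P)$ as an abstract graded $\ZZ$-algebra, but to conclude that the given isomorphism $\varphi$ actually restricts to a signed bijection of the Thom class generators requires an intrinsic characterization of the $\tau_j$'s inside $\mathrm{SR}(P)$, for instance as the primitive squarefree elements of degree two participating in the minimal monomial relations. Controlling this integral sign ambiguity is crucial: without it, step (b) would yield only a rational linear automorphism of $H^2(BT)$ rather than an honest torus automorphism $\delta$.
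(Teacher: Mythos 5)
Your strategy for the converse coincides with the paper's \textbf{Case 1}: pass to Stanley--Reisner rings via Theorem \ref{thm_equiv=SR}, invoke Proposition \ref{prop_brgu} to get the facet bijection $\phi$ and the face-preserving homeomorphism $\bar\phi$, match Thom classes up to sign, and then use the $H^\ast(BT)$-algebra structure \eqref{eq_algebra_str} together with Theorem \ref{thm_recovering_char_vec} to convert the coefficient identities into the relation $\xi'_j=\epsilon_j\,\delta_\ast(\xi_{\phi(j)})$. Two points where you genuinely differ, both to your credit: you track the twist $\delta^\ast$ of the weak isomorphism explicitly (the paper's displayed computation writes $\pi'^\ast=\psi\circ\pi^\ast$ as if the isomorphism were strict), and you assemble the final homeomorphism directly on the model $(T\times P)/_\sim$ of Corollary \ref{cor_two_def_are_equiv} via $(t,p)\mapsto(\delta(t),\bar\phi^{-1}(p))$, using that the isotropy groups $T_p$ depend only on the circle subgroups $S^1(\xi_j)$ and hence are insensitive to the signs $\epsilon_j$. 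The paper instead lifts to the moment-angle manifold $\mathcal{Z}_P$, builds a sign-flipping automorphism of $T^m$ carrying $\exp(\ker\xi)$ to $\exp(\ker\xi')$, and descends through Proposition \ref{prop_axion=const}; your route is more direct and avoids that detour. You also correctly isolate the one assertion both arguments lean on without full proof, namely that a graded isomorphism of the face rings must send the degree-two generators to signed generators; the paper simply asserts this ``by the proof of Theorem \ref{thm_equiv=SR},'' and your proposed intrinsic characterization of the $\tau_j$ inside ${\rm SR}(P)$ is the right way to close it (this is exactly the delicate step in \cite{Mas}).

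There is, however, a genuine gap in coverage: the theorem only assumes $\im(\xi)$ and $\im(\xi')$ are direct summands of $\ZZ^{n+k}$, not that they equal $\ZZ^{n+k}$, whereas every tool you use --- Theorem \ref{thm_equiv=SR}, Lemma \ref{lem_relation_alg_str_coeff_and_char_vec}, Theorem \ref{thm_recovering_char_vec} --- requires $\rk(\xi)=n+k$. When $\rk(\xi)=n+r<n+k$ your argument does not apply as written. The paper's \textbf{Case 2} handles this by Proposition \ref{prop_rank_n_n+1_classification}: $M\cong T^{k-r}\times N$ for a full-rank locally $r$-standard $T$-manifold $N$, the equivariant cohomology of $M$ is identified with $H^\ast_{T_\xi^{n+r}}(N)$ with the complementary factor $H^\ast(BT^{k-r})$ acting trivially, and the weak isomorphism descends to one between $N$ and $N'$, reducing to the full-rank case. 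You need to add this reduction (and, implicitly, the observation that the ranks of $\xi$ and $\xi'$ agree, which can be read off from the subalgebra of $H^2(BT)$ acting trivially). Without it your proof establishes a weaker statement than the theorem claims.
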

\begin{proof} 
Suppose we have two locally $k$-standard $T$-manifolds $M$ and $M'$ of dimension $(2n+k)$. Let 
$\psi \colon H^\ast_T(M) \to H^\ast_T(M')$
be a weak isomorphism as $H^\ast(BT)$-algebras. We adhere notations discussed in Section \ref{sec_tctm}. 

\vspace{0.1cm}
\noindent\textbf{Case 1}. We first consider the case when $\xi$ and $\xi'$ are surjective. Theorem \ref{thm_equiv=SR} implies that the equivariant cohomology ring of $M$ is isomorphic to the Stanley--Reisner ring ${\rm SR}(P)$ of the orbit space $P$ of $M$. Hence, an algebra isomorphism $\psi$ induces an isomorphism between ${\rm SR}(P)$ and ${\rm SR}(P')$. Using Proposition \ref{prop_brgu}, $\psi$ induces a bijection 
$$\phi \colon \mathcal{F}(P) \to \mathcal{F}(P')$$ 
which also defines a face preserving homeomorphism $\bar \phi \colon P \to P'$. Moreover, by the proof of Theorem \ref{thm_equiv=SR}, we have a bijection between two sets of equivariant Thom classes of $M$ and $M'$ up to sign.

Since $\psi$ is an algebra isomorphism, the algebra structures are compatible with $\psi$, i.e., 
$\pi'^\ast=\psi\circ \pi^\ast$, where $\pi^\ast$ and $\pi'^\ast$ are defined in \eqref{eq_algebra_str} for $M$ and $M'$, respectively. Then, 
$$\pi'^\ast(u)= \sum_{j=1}^m \left<  \xi'_j, u \right> \tau'_j$$ and 
$$
\psi (\pi^\ast(u))= \sum_{j=1}^m \left<  \xi_j,  u \right> \psi (\tau_j) = \sum_{j=1}^m \epsilon_j \big\langle \xi_{\phi(j)}, u \big\rangle \tau'_{\phi(j)}
$$
for some $\epsilon_j=\pm 1$ and $m\colonequals |\mathcal{F}(P)|=|\mathcal{F}(P')|$. Here, we may regard $\phi$ as a permutation on $\{1, \dots, m\}$.  Hence, we get $\epsilon_j \langle \xi_{\phi(j)}, u \rangle = \langle  \xi'_j, u\rangle$ for each $j$. 

Let $J:= \{ j \mid \epsilon_j=-1\}$ and define an automorphism $\delta \colon T^m \to T^m$ 
such that $\delta$ sends $j$-th coordinate of  $(t_1, \dots, t_m)\in T^m$ to its conjugate whenever $j\in J$. This implies that $\delta( \exp (\ker \xi) )= \exp (\ker \xi')$. Hence, we have the following commutative diagram
\begin{equation*}
\begin{tikzcd}
\mathcal{Z}_P = (T^m \times P) /_{\sim_z} \arrow{r}{\Psi} \arrow{d} & (T^m \times P')/_{\sim_z}=\mathcal{Z}_{P'}\arrow{d}\\ 
\mathcal{Z}_P/ \exp(\ker \xi) \arrow{r}{\widetilde\Psi} & \mathcal{Z}_{P'}/ \exp (\ker \xi'),
\end{tikzcd}
\end{equation*}
where $\Psi$ is a $\delta$-equivariant homeomorphism induced from $\delta\times \bar{\phi}$. Therefore, $\widetilde{\Psi}$ is a weakly equivariantly homeomorphism with respect to an isomorphism between $T^m/ \ker \xi$ and $T^m/ \ker \xi'$ induced from $\delta$. Hence, the result follows in this case. 

\vspace{0.1cm}
\noindent\textbf{Case 2}. Now, we consider the case where $\rk(\xi) < n+k$. In this case, by Proposition \ref{prop_rank_n_n+1_classification}, $M$ is equivariantly homeomorphic to $T^{n+k - \rk(\xi)} \times N$ for some locally $r$-standard $T$-manifold $N$ with a surjective hyper characteristic function where $0 \leq r < k$. The torus $T$ is also decomposed into $T^{n+k-\rk(\xi)} \times T_\xi^{\rk(\xi)}$ accordingly. Observe that 
\begin{align*}
H^\ast_T(M)&=H^\ast(ET\times_T (T^{n+k - \rk(\xi)} \times N)) \\
&= H^\ast((ET^{n+k - \rk(\xi)} \times_{T^{n+k - \rk(\xi)}}T^{n+k - \rk(\xi)})\times (ET_\xi^{\rk(\xi)} \times_{T_\xi^{\rk(\xi)}} N))\\
&=H^\ast_{T_\xi^{\rk(\xi)}}(N).
\end{align*}
We note that $H^\ast(BT_\xi^{\rk(\xi)})$-algebra structure on $H^\ast_{T_\xi^{\rk(\xi)}}(N)$ is induced from the $H^\ast(BT)$-algebra structure on $H^\ast_T(M)$ and the decomposition 
$$H^\ast(BT)\cong H^\ast(BT^{n+k-\rk(\xi)})\otimes H^\ast(BT_\xi^{\rk(\xi)}),$$ 
where $H^\ast(BT^{n+k-\rk(\xi)})$ acts trivially on $H^\ast_{T_\xi^{\rk(\xi)}}(N)$. 

A weak isomorphism $\psi \colon H^\ast_T(M) \to H^\ast_T(M')$ induces a weak isomorphism 
$$\psi' \colon H^\ast_{T_\xi^{\rk(\xi)}}(N) \to H^\ast_{T_{\xi'}^{\rk(\xi')}}(N')$$
between two locally $r$-standard $T$-manifolds $N$ and $N'$, which are now in \textbf{Case 1}.  Hence, $N$ and $N'$ are weakly homeomorphic, which implies that $M$ and $M'$ are also weakly homeomorphic. 
\end{proof}

We note that Theorem \ref{thm_main_rigidity} generalizes the result of \cite{Mas} which deals with quasitoric manifolds.

\subsection*{Acknowledgements}
The authors are grateful to IBS--CGP for the hospitality in July 2019 and grateful to KAIST and IIT-Madras for supporting their visits. They are also grateful to Anthony Bahri,  Mikiya Masuda and Dong Youp Suh for helpful comments. The authors thank the anonymous referee for  helpful comments to improve the manuscript. 

The first author is supported by MATRICS grant MTR/2018/000963 of SERB India and International office of IIT Madras.
The second author has been supported by Basic Science Research Program through the National Research Foundation of Korea (NRF) funded by the Ministry of Education (NRF-2018R1D1A1B07048480) and a KIAS Individual Grant (MG076101) at Korea Institute for Advanced Study.

%
%

\end{document}